\title{Mutually embeddable models of ZFC}
\author{Monroe Eskew}
\author{Sy-David Friedman}
\author{Yair Hayut}
\author{Farmer Schlutzenberg}
\thanks{The first two authors would like to acknowledge the support of the Austrian Science Fund (FWF) through Project P28420.}
\thanks{The third author's research was partially supported by the FWF Lise Meitner Grant (2650-N35).}
\thanks{The fourth author was supported by the Deutsche Forschungsgemeinschaft (DFG, German Research Foundation) under Germany's Excellence Strategy EXC 2044--390685587, Mathematics M\"unster: Dynamics–Geometry–Structure.}
\address{Monroe Eskew and Sy-David Friedman: Universität Wien \\
Institut für Mathematik \\
Kurt Gödel Research Center \\
Kolingasse 14-16 \\
1090 Wien \\
AUSTRIA}
\address{Yair Hayut: 
Einstein Institute of Mathematics\\
Edmond J. Safra Campus\\
The Hebrew University of Jerusalem\\
Givat Ram. Jerusalem, 9190401, ISRAEL}
\address{Farmer Schlutzenberg: WWU M\"unster\\
	Institut f\"ur Mathematische Logik und Grundlagenforschung\\
	Einsteinstra{\ss}e 62\\
	48149 M\"unster\\
	GERMANY}
\date{}
\newtheorem{theorem}{Theorem}
\newtheorem{lemma}[theorem]{Lemma}
\newtheorem{proposition}[theorem]{Proposition}
\newtheorem{question}{Question}
\newtheorem{claim}[theorem]{Claim}
\newtheorem*{definition}{Definition}
\newtheorem{remark}[theorem]{Remark}
\DeclareMathOperator{\dom}{dom}
\DeclareMathOperator{\ran}{ran}
\DeclareMathOperator{\rank}{rank}
\DeclareMathOperator{\cf}{cf}
\DeclareMathOperator{\add}{Add}
\DeclareMathOperator{\col}{Col}
\DeclareMathOperator{\ord}{Ord}
\DeclareMathOperator{\crit}{crit}
\DeclareMathOperator{\len}{len}
\DeclareMathOperator{\ult}{Ult}
\newcommand{\range}{\mathrm{range}}
\newcommand{\rg}{\mathrm{range}}
\newcommand{\lh}{\mathrm{lh}}
\newcommand{\om}{\omega}
\newcommand{\p}{\mathcal{P}}
\newcommand{\la}{\langle}
\newcommand{\ra}{\rangle}
\newcommand{\OR}{\mathrm{Ord}}
\newcommand{\ZF}{\mathrm{ZF}}
\newcommand{\ZFC}{\mathrm{ZFC}}
\newcommand{\inter}{\cap}
\newcommand{\sub}{\subseteq}
\newcommand{\HOD}{\mathrm{HOD}}
\newcommand{\sats}{\models}
\newcommand{\rest}{\upharpoonright}
\newcommand{\Ord}{\OR}
\begin{document}

\begin{abstract}
We investigate systems of transitive models of $\ZFC$ which
are elementarily embeddable into each other and the influence of definability properties on such systems.
\end{abstract}
\maketitle


One of Ken Kunen’s best-known and most striking results is that there is no
elementary embedding of the universe of sets into itself other than the identity.
Kunen’s result is best understood in a theory that includes
proper classes as genuine objects, such as von Neumann-Gödel-Bernays set theory
(NBG), which we take in this article as our background theory.
The argument proceeds by assuming there is a nontrivial $j : V \to V$ and using Replacement and Comprehension for formulas involving the parameter $j$, arriving at a contradiction.
It does not exclude the possibility of the existence of an inner model $M$ of Zermelo-Fraenkel set theory (ZFC) and a nontrivial elementary $j : M \to M$, where $M$ is not the class of sets of an NBG-universe that includes the class $j$.  Indeed, Kunen’s early work on $0^\sharp$ showed an equivalence between the nontrivial self-embeddability of $L$ and the existence of a certain real number external to $L$.
In this paper we take up a related theme, that of \emph{mutual} embeddability between
models of set theory.

In \cite{2019arXiv190506062E}, the first two authors asked whether there can be 
two transitive models $M \neq N$ of $\ZFC$ (necessarily with the same ordinals) 
such that each is elementarily embeddable into the other; that is, such that 
there are elementary embeddings $j\colon M\to N$ and $k\colon N\to M$.  We say
such models are \emph{mutually embeddable}. We answer the 
question affirmatively with the following result:

\begin{theorem}\label{thm:0^sharp}
The following are equivalent:
\begin{itemize}
\item $0^\sharp$ exists.
\item There are transitive models $M \neq N$ of $\ZFC$ containing all ordinals which are mutually embeddable. 
\end{itemize}
\end{theorem}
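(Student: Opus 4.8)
I would prove the two implications separately, the construction of the models being the substantial half. \emph{From mutual embeddability to $0^\sharp$.} Suppose $M \neq N$ are transitive models of $\ZFC$ containing all ordinals, with elementary embeddings $j \colon M \to N$ and $k \colon N \to M$. I first claim that at least one of $k \circ j \colon M \to M$ and $j \circ k \colon N \to N$ is nontrivial: otherwise $j$ and $k$ are mutually inverse $\in$-isomorphisms between transitive classes, forcing $M = N$. Interchanging $M$ and $N$ if necessary, fix a nontrivial $\ell = k \circ j \colon M \to M$. Since $\ell$ is not the identity it moves an ordinal (an elementary self-map fixing every ordinal fixes every set by $\in$-induction), so it has a critical point $\kappa$ with $\ell(\kappa) > \kappa$. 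As $M$ is transitive and contains all ordinals, $L^M = L$, and $\ell$ carries $L$ to $L$ by elementarity (constructibility is expressible in $M$); hence $\ell \rest L \colon L \to L$ is elementary and nontrivial, because it moves $\kappa$. By Kunen's theorem that a nontrivial elementary $j \colon L \to L$ exists precisely when $0^\sharp$ exists, $0^\sharp$ exists.

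\emph{From $0^\sharp$ to the models.} Here I would exploit the closed unbounded class $I = \la i_\alpha : \alpha \in \Ord \ra$ of Silver indiscernibles and the shift embeddings of $L$ it induces, using that an order- and pattern-preserving map of the indiscernibles lifts to an elementary embedding of the associated constructible-closure models. The goal is a non-constructible class $A$ and a shifted copy $A'$ for which the shift $i_\alpha \mapsto i_{\alpha+1}$ induces elementary embeddings $L[A] \to L[A']$ and $L[A'] \to L[A]$, so that $M = L[A]$ and $N = L[A']$ are transitive models of $\ZFC$ containing all ordinals which are mutually embeddable.

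The main obstacle is to make the two models genuinely distinct. Mutual embeddability forces $M \equiv N$, so if either satisfied $V = L$ both would equal $L$; thus both must properly extend $L$. Moreover a Kunen-style argument applied to the self-embedding $\ell$ of the previous paragraph shows that, in the minimal case $V = L[0^\sharp]$, neither model can equal $V$, so the two models must lie strictly between $L$ and $L[0^\sharp]$. This defeats the naive choice of $A$ as a subclass of $I$: any infinite set of indiscernibles already computes $0^\sharp$, which in turn recovers all of $I$, so every model of the form $L[\text{class of indiscernibles}]$ collapses to $L[0^\sharp]$. The heart of the construction is therefore to extract from $I$ a non-constructible, shift-coherent object $A$ of strictly lower complexity than $0^\sharp$ — so that $L[A]$ does not compute $0^\sharp$ and sits strictly below $L[0^\sharp]$ — together with a shifted variant $A'$ with $A' \neq A$ modulo $L$, and then to check that the shift of the indiscernibles lifts to elementary maps between $L[A]$ and $L[A']$ in both directions. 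Securing distinctness together with the two-sided lifting is where the real work lies.
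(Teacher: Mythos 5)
Your left-to-right implication (mutual embeddability yields $0^\sharp$) is correct and is essentially the paper's argument: the paper restricts a single non-identity $j\colon M\to N$ to $L^M=L$ and invokes Kunen's theorem, while you first compose to get a self-embedding $\ell=k\circ j$; your rank-induction argument that a nontrivial elementary \emph{self}-map must move an ordinal is valid (and in fact avoids the coding-by-sets-of-ordinals step one needs when domain and codomain differ). The problem is the converse, and there your proposal stops exactly where you say ``the real work lies'': you formulate desiderata for $A$ and $A'$ --- non-constructible, shift-coherent, too weak to compute $0^\sharp$, distinct modulo $L$ --- but you produce no such objects and give no argument that the shift lifts elementarily in both directions. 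That is not a routine verification left to the reader; it is the content of the theorem.

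The paper's mechanism, absent from your sketch, is forcing over $L$. Since $0^\sharp$ gives $|\kappa_i|=|i|+\aleph_0$ in $V$ for the Silver indiscernibles $\kappa_i$, generic filters over $L$ for the relevant $L$-definable posets exist in $V$. Fix a ``good'' Easton-style product $\mathbb{P}$ (say $\add(\alpha^+,1)$ at each $L$-inaccessible $\alpha$), and let $j\colon L\to L$ be the shift $\kappa_n\mapsto\kappa_{n+1}$ for $n<\omega$, fixing all higher indiscernibles. Choose $G_0$ generic for the block below $\kappa_0$, choose a \emph{mutually generic} pair $(G_1,H_1)$ for the block $[\kappa_0,\kappa_1)$ --- this is what makes the final models distinct, indeed $\subseteq$-incomparable --- and then swap through $j$: let $G_{n+1}$ be the filter generated by $j[H_n]$ and $H_{n+1}$ the filter generated by $j[G_n]$. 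Two lemmas drive this: first, the $j$-image of an $L$-generic filter for one block generates an $L$-generic filter for the image block (proved by writing a maximal antichain as a Skolem term in indiscernibles, letting the parameters below the block vary over arbitrary tuples, and using distributivity to refine all the resulting antichains simultaneously); second, a $\kappa_\omega$-c.c.\ argument glues the blocks into single generics $G,H$ for $\mathbb{P}(0,\kappa_\omega)$. By construction $j[G]\subseteq H$ and $j[H]\subseteq G$, so the one embedding $j$ lifts both to $L[G]\to L[H]$ and to $L[H]\to L[G]$, and $M=L[G]$, $N=L[H]$ are transitive proper class models of $\ZFC$ as required. Your desiderata then hold automatically (a set-generic extension of $L$ never contains $0^\sharp$), but what actually secures distinctness together with two-sided lifting is the combination of mutual genericity at the bottom block with the alternating swap defining $G_{n+1}$ and $H_{n+1}$ --- an idea your outline does not contain and which does not follow from the desiderata themselves.
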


There are natural variants of the original 
question, given by making more demands on either the models $M,N$ or the 
embeddings $j,k$ witnessing
mutual embeddability. For example, what can hold regarding
whether $M\sub N$ or $N\sub M$? To what extent can $j,k$
be amenable to $M,N$ (that is, how much of $j,k$ can belong to $M,N$)? Can we 
arrange that $j\rest\Ord=k\rest\Ord$?
Can we arrange that 
$M,N\sats$``$V=\HOD$''?
Of course by Kunen's famous inconsistency result, $k\circ j$
cannot be an amenable class of $M$. 
A second restriction is the following:

\begin{proposition}\label{prop:3_models} Let $M_1,M_2,N$
be models of $\ZF$, $M_1 \cap \Ord = M_2 \cap \Ord$. Let us assume that there are elementary embeddings $j_1 \colon M_1 \to N$ and $j_2 \colon M_2 \to N$, such that $j_1 \restriction \Ord = j_2 \restriction \Ord$. Then $\HOD^{M_1} = \HOD^{M_2}$ and $j_1 \restriction \HOD^{M_1} = j_2 \restriction \HOD^{M_2}$.
\end{proposition}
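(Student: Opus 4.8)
The plan is to code each model's $\HOD$ by its own ordinals, using the canonical ordinal-definable well-ordering of $\HOD$, and then exploit the two facts that $j_1,j_2$ agree on ordinals and that both land in the single target $N$. (I take the models to be transitive, as elsewhere; well-foundedness is all the collapse step below actually needs.) Fix once and for all a formula $\Phi(\alpha,x)$ expressing ``$x$ is the $\alpha$-th element of $\HOD$ in the canonical ordinal-definable well-ordering of $\HOD$'', and a formula $\Psi(\alpha,\beta)$ expressing ``the $\alpha$-th element of $\HOD$ belongs to the $\beta$-th element of $\HOD$''. Since that well-order is set-like of order type $\Ord$, in each transitive model $M\sats\ZF$ the formula $\Phi$ defines a bijection $f^M\colon\Ord^M\to\HOD^M$, and $\Psi$ defines a relation $E^M$ on $\Ord^M$ with $f^M\colon(\Ord^M,E^M)\cong(\HOD^M,\in)$. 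Writing $\delta=M_1\cap\Ord=M_2\cap\Ord$ and $\ell=j_1\rest\Ord=j_2\rest\Ord$, I first record that elementarity of $j_i$ applied to $\Phi$ gives $j_i(f^{M_i}(\alpha))=f^N(j_i(\alpha))=f^N(\ell(\alpha))$ for every $\alpha<\delta$.

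The crux is to show that the two coding relations coincide, that is, $E^{M_1}=E^{M_2}$ as relations on $\delta$. This is where the hypotheses combine: for $\alpha,\beta<\delta$ I would compute
\[
M_1\sats\Psi(\alpha,\beta)\iff N\sats\Psi(j_1(\alpha),j_1(\beta))\iff N\sats\Psi(\ell(\alpha),\ell(\beta))\iff M_2\sats\Psi(\alpha,\beta),
\]
using elementarity of $j_1$ for the first equivalence, $j_1\rest\Ord=\ell=j_2\rest\Ord$ for the middle one, and elementarity of $j_2$ for the last. Thus the membership pattern of $\HOD$, read off through the ordinal coding, is forced to agree in $M_1$ and $M_2$ precisely because both are measured against the same structure $N$ along maps that agree on ordinals.

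Finally I would pass back from the codes to the sets. Since $M_1,M_2$ are transitive and each $\HOD^{M_i}$ is a genuinely transitive, well-founded class carrying the real membership relation, each $f^{M_i}$ is the actual Mostowski collapse of $(\delta,E^{M_i})$; by absoluteness of the transitive collapse of a (now genuinely) well-founded, extensional, set-like relation, together with $E^{M_1}=E^{M_2}$, we get $f^{M_1}=f^{M_2}=:f$ and hence $\HOD^{M_1}=\HOD^{M_2}=:H$. For the embeddings, every $x\in H$ has the form $x=f(\alpha)$, and the computation of the first paragraph gives $j_1(x)=f^N(\ell(\alpha))=j_2(x)$, so $j_1\rest H=j_2\rest H$, which is exactly $j_1\rest\HOD^{M_1}=j_2\rest\HOD^{M_2}$. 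I expect the main obstacle to be the bookkeeping that $\Phi$ and $\Psi$ are genuine, uniformly applicable formulas transferred correctly by elementarity (in particular that the canonical well-order of $\HOD$ is uniformly definable and set-like), and the check that the collapse of $E^{M_i}$ computed inside $M_i$ is the true collapse in $V$; once these are in place, the chain of equivalences and the concluding identifications are routine.
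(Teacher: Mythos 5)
Your proof is correct, but it is packaged differently from the paper's. The paper argues set-of-ordinals by set-of-ordinals: given $X_1\in\HOD^{M_1}$ defined in $M_1$ by $\varphi(\cdot,\eta)$, it forms the companion set $X_2$ defined in $M_2$ by the same formula and parameter, uses elementarity plus $j_1(\eta)=j_2(\eta)$ to get $j_1(X_1)=j_2(X_2)$, and then pulls membership back pointwise ($\beta\in X_1$ iff $j_1(\beta)\in j_1(X_1)$, and likewise for $j_2$) to conclude $X_1=X_2$; the passage from ``the ordinal-definable sets of ordinals and the embeddings' action on them agree'' to the full conclusion about $\HOD$ is left implicit, via the standard coding of $\HOD$-elements by sets of ordinals in $\HOD$. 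You instead fix the canonical definable enumeration $f\colon\Ord\to\HOD$ and the induced membership relation $E$ on ordinal codes, run the same engine (elementarity of both embeddings into the single target $N$, plus agreement on $\Ord$) on the one formula $\Psi$ to get $E^{M_1}=E^{M_2}$, and finish by uniqueness of the Mostowski collapse. The engine is identical — ordinal-parametrized definitions transferred through $N$ along maps agreeing on ordinals — but your route treats all of $\HOD$ at once and makes the final identification of the two $\HOD$'s and of $j_1\rest\HOD^{M_1}=j_2\rest\HOD^{M_2}$ completely explicit, at the cost of invoking the set-likeness and $\Ord$-order-type of the canonical well-order of $\HOD$ and collapse uniqueness for class relations; the paper's version needs none of that machinery, only elementarity applied formula-by-formula, but correspondingly leaves the last coding step to the reader. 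Both proofs are sound.
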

\begin{proof}
Let $X_1 \subseteq \Ord$ with $X_1 \in \HOD^{M_1}$. Let $\varphi(x, y)$ be a formula and $\eta \in \Ord$ such that $X_1 = \{\alpha \in \Ord \mid M_1 \models \varphi(\alpha, \eta)\}$. Let $X_2 = \{\alpha \in \Ord \mid M_2 \models \varphi(\alpha, \eta)\}$. 

By elementarity for $\iota \in \{1, 2\}$, 
\[j_\iota(X_\iota) = \{\alpha \in \Ord^N \mid N \models \varphi(\alpha, j_\iota(\eta))\}.\]
Since $j_1(\eta) = j_2(\eta)$, we conclude that $j_1(X_1) = j_2(X_2)$. In particular, for every $\beta \in \Ord$, $\beta \in X_2$ iff $j_2(\beta) \in j_2(X_2)$ iff $j_1(\beta) \in j_1(X_1)$ iff $\beta \in X_1$.
\end{proof}
 


We conclude that given a diagram of the form:
\begin{center}
\begin{tikzcd}
& {N_1}\arrow[rd, "k_1"] & \\
M \arrow[ru, "j_1"]\arrow[rd, "j_2"] & & M \\
& N_2\arrow[ru, "k_2"] &
\end{tikzcd}
\end{center}
where $k_1 \restriction \Ord = k_2 \restriction \Ord$, we have that $\HOD^{N_1} = \HOD^{N_2}$. So if (for example) $M \models V = \HOD$, then $N_1 = N_2$, and therefore we cannot have three distinct models of ``$V = \HOD$'' that are pairwise mutually embeddable via embeddings that all agree on $\Ord$.
This does not rule out the possibility of a cycle of the form $M \to^{j} N \to^{k} M$, where $j \restriction \Ord = k \restriction \Ord$, $M = \HOD^{M} \neq N = \HOD^N$.  We show this is consistent with Theorem \ref{cor:hod-2-cycle}.

In \S\ref{sec:0^sharp}, we prove Theorem \ref{thm:0^sharp} and related results.  These constructions all involve a collection of generic extensions of $L$, each of which is a subclass of $V$, that are mutually embeddable via liftings of embeddings generated by Silver indiscernibles.  In \S\ref{sec:measurables}, we give a different construction using many measurable cardinals.  This method has the advantage that all of the models in the system can satisfy any given theory consistent with ZFC + many measurables.  In \S\ref{sec:2-cycles}, we address some remaining questions using a different type of construction we call strong 2-cycles.  With these techniques, we are able to build systems of more than two mutually embeddable models such that the embeddings all agree on the ordinals.  These constructions require significantly greater consistency strength, which we attempt to calibrate.  We conclude with some open questions.

The central idea of \S\ref{sec:2-cycles} was first observed by the fourth author using rank-into-rank embeddings, and then the consistency strength was reduced below a strong cardinal.  Afterwards, the construction of \S\ref{sec:0^sharp} using the optimal consistency strength was found, followed by the arguments of \S\ref{sec:measurables}.


\section{Constructions from $0^\sharp$}\label{sec:0^sharp}
In this section we prove the following strengthening of Theorem \ref{thm:0^sharp}.
\begin{theorem}
\label{main0sharp}
The following are equivalent.
\begin{enumerate}
\item $0^\sharp$ exists.
\item There are transitive models $M,N$ of $\ZFC$ containing all ordinals  
or $\OR\inter M=\OR\inter N$ is a cardinal and there is a non-identity
$j\colon M\to N$.
\item\label{item:mutual_pair} (a) There are two proper class transitive models of $\ZFC$, $M \nsubseteq N \nsubseteq M$, and elementary embeddings $j : M \to N$, $k : N \to M$ that agree on the ordinals. (b) We may take $M=L[G]$ and $N=L[H]$ to be generic extensions of $L$, having a common generic extension $L[G,H]$, such that $j,k$ extend
to an elementary $j^+:L[G,H]\to L[G,H]$ with  $j^+(G,H)=(H,G)$.
\item\label{item:mutual_family} For every cardinal $\kappa$, there is a collection of $\kappa$-many proper 
class transitive models of ZFC 
which are pairwise mutually embeddable
and $\subseteq$-incomparable.
\end{enumerate}
We can moreover arrange in part  (\ref{item:mutual_pair})
 that $M,N,j,k$ are definable over $V$ without 
parameters. In part (\ref{item:mutual_family}), for $\kappa = 2^\omega$, the family 
and the system of witnessing embeddings can be taken definable without 
parameters.
\end{theorem}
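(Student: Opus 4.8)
The plan is to prove the theorem by establishing a cycle of implications among the four conditions, with the hard direction being the construction of the models from $0^\sharp$.
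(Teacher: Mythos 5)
There is a genuine gap here: your proposal contains no mathematical content beyond a choice of proof architecture. Saying ``establish a cycle of implications, with the hard direction being the construction from $0^\sharp$'' identifies where the work lies but does none of it. The paper's proof of the easy direction is that a non-identity $j\colon M\to N$ with $\OR\cap M=\OR\cap N=\lambda$ a cardinal (or $\OR$) restricts to a nontrivial $j\rest L_\lambda\colon L_\lambda\to L_\lambda$, so $0^\sharp$ exists by Kunen's theorem; your proposal does not even cite this. For the hard direction, the entire substance is missing: one must (i) isolate a class of ``good'' forcings over $L$ (e.g.\ Easton-support products of $\mathrm{Add}(\alpha^+,1)$ over a definable class of $L$-inaccessibles containing the Silver indiscernibles), with distributivity and factoring conditions; (ii) prove the key lemma that if $j\colon L\to L$ is generated by shifting indiscernibles and $G$ is $\mathbb{P}(\kappa_\alpha,\kappa_{\alpha+1})$-generic over $L$, then $j[G]$ generates a generic filter for $\mathbb{P}(\kappa_\beta,\kappa_{\beta+1})$ --- this uses the representation of every maximal antichain as a Skolem term in indiscernibles together with distributivity to form a refinement of all antichains obtained by varying the parameters below $\kappa_\alpha$; (iii) build the two generics $G,H$ by alternating images under the minimal shift embedding so that $j[G]\subseteq H$ and $j[H]\subseteq G$, which is what allows both liftings, with mutual genericity at the first step giving $\subseteq$-incomparability and a $\kappa_\omega$-c.c.\ argument giving genericity of the limits; and (iv) for arbitrarily many models, establish the existence of generics over $L$ for uncountable posets (via the indiscernibles) and run a tree-indexed version of the construction with an almost-disjoint family of branch images. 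None of these ideas appears in your proposal, and they are not routine.

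A further concrete problem is that the cycle architecture itself does not fit this theorem. Conditions (\ref{item:mutual_pair}) and (\ref{item:mutual_family}) are not readily derivable from one another: (\ref{item:mutual_family}) gives many models but says nothing about the embeddings agreeing on the ordinals or the models being generic extensions of $L$ with a common generic extension $L[G,H]$ and a swap map $j^+$, while (\ref{item:mutual_pair}) gives only two models. The paper accordingly uses a hub-and-spoke structure: each of (2), (3), (4) implies (1) through Kunen, and (1) implies each of (2), (3), (4) by (variants of) one construction. Moreover, the ``moreover'' clauses about definability without parameters cannot be transported along implications at all; they are properties of the particular construction (using the definability of $0^\sharp$ and the resulting definable enumeration of the relevant objects), so any correct proof must exhibit the construction explicitly, which is exactly what your proposal defers.
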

\begin{proof}
If $j:M\to N$ where $M,N\sats\mathrm{ZFC}$
is non-identity and $\lambda=\OR^M=\OR^N$ is either a cardinal or 
$=\OR$,
then of course $j\rest L_\lambda:L_\lambda\to L_\lambda$, so by Kunen's work,
$0^\sharp$ exists.

So assume $0^\sharp$ exists. 
Let $\la \kappa_i : i \in \ord \ra$ enumerate the closed class of Silver indiscernibles. 
Each $\kappa_i$ is weakly compact in $L$, but $|\kappa_i| = |i| + 
\aleph_0$ in $V$.  The key well-known fact that enables our construction is:

\begin{lemma}
For each $x \in L$, there is a 
definable Skolem function $\tau(v_0,\dots,v_{n})$ and Silver indiscernibles $\kappa_{i_0}<\dots<\kappa_{i_n}$ such that $x = 
\tau^L(\kappa_{i_0},\dots,\kappa_{i_n})$. Moreover, if $\rank(x)<\kappa_{i_k}$
and $\kappa_{i_k}\leq\kappa_{i_k'}<\ldots<\kappa_{i_n'}$ are also Silver indiscernibles
then $x=\tau^L(\kappa_{i_0},\ldots,\kappa_{i_{k-1}},\kappa_{i_k'},\ldots,\kappa_{i_n'})$.
\end{lemma}

\newcommand{\PP}{\mathbb{P}}
We give a construction that works for a general class of forcings:
\begin{definition}
A class function $\mathbb{P}:[\OR]^2\to V$ is \emph{good} iff
\begin{enumerate}
	\item $\PP\sub L$ is $L$-definable without parameters,
	\item\label{item:distrib} for all $L$-indiscernibles $\kappa$,
	and either $\mu=0$ or $\mu<\kappa$ also $L$-indiscernible,
	$L\models$``$\PP(\mu,\kappa)\subseteq L_\kappa$ is a $\mu^+$-distributive forcing with largest condition $1$'',
	\item\label{item:factoring} for all $L$-indiscernibles $\mu<\kappa$,
	$L\models$``$\PP(0,\mu)\times\PP(\mu,\kappa)\cong\PP(0,\kappa)$,
	as witnessed by $\pi:\PP(0,\mu)\times\PP(\mu,\kappa)\to\PP(0,\kappa)$
	with $\pi(p,1)=p$ for all $p\in\PP(0,\mu)$''.
	\end{enumerate}
	\end{definition}
Note that condition (\ref{item:factoring}) makes sense
because $\PP(0,\mu)=\PP(0,\kappa)\cap L_\mu$
whenever $\mu<\kappa$ are $L$-indiscernibles.

For example, work in $L$. Let $\mathcal C$ be a class of ordinals defined without parameters that (in $V$) includes all Silver indiscernibles.  For an ordinal $\alpha$, let $\alpha^{+\mathcal C}$ denote the least ordinal in $\mathcal C$ above $\alpha$. Assume that for each $\kappa \in \mathcal C$, $\mathbb Q(\kappa)$ is a nontrival forcing defined uniformly  from parameter $\kappa$, such that $\mathbb Q(\kappa)$ is $\kappa^+$-closed and of cardinality $\leq \kappa^{+\mathcal C}$.  For example, we can take $\mathcal C$ to be the class of inaccessibles and $\mathbb Q(\kappa)$ to be $\add(\kappa^+,1)$ or $\col(\kappa^+,\kappa^{+\mathcal C})$.  Let $\mathbb P(\kappa,\delta)$ be the Easton-support product of 
$\mathbb Q(\alpha)$ over $\alpha\in \mathcal C \cap [\kappa,\delta)$. 
Then $\PP$ is good.

For the following lemma, we don't actually need the factoring condition (\ref{item:factoring})
of goodness:

\begin{lemma}
	\label{gengen}
	Let $\PP$ be good.  Let $j : L \to L$ be elementary.   Suppose $j(\kappa_\alpha) = \kappa_\beta$ and $G \subseteq \mathbb P(\kappa_\alpha,\kappa_{\alpha+1})$ is generic over $L$.  Then $j[G]$ generates a $\mathbb P(\kappa_\beta,\kappa_{\beta+1})$-generic filter over $L$.
\end{lemma}

\begin{proof}
We may and do assume $j(\kappa_{\alpha+n})=\kappa_{\beta+n}$ for all $n<\omega$,
since there is an elementary $k:L\to L$ such that $k\rest\kappa_{\alpha+1}=j\rest\kappa_{\alpha+1}$ and $k(\kappa_{\alpha+n})=\kappa_{\beta+n}$ for $n<\omega$, and hence $j[\mathbb{P}(\kappa_\alpha,\kappa_{\alpha+1})]=k[\mathbb{P}(\kappa_\alpha,\kappa_{\alpha+1})]\sub\mathbb{P}(\kappa_\beta,\kappa_{\beta+1})$. 

Clearly $j[G]$ generates a filter over $\mathbb{P}(\kappa_\beta,\kappa_{\beta+1})$.
So let $A \subseteq \mathbb P(\kappa_\beta,\kappa_{\beta+1})$ be a maximal antichain in $L$.
It suffices to see that $j[G]$ has some element below a condition in $A$.

Fix a Skolem term $\tau$, ordinals ${i_0} < \dots < i_{\ell-1}<\beta$ and a number $n<\omega$ with
	$$A = \tau^L(\kappa_{i_0},\dots,\kappa_{i_{\ell-1}},\kappa_\beta,\kappa_{\beta+1},\dots,\kappa_{\beta+n+1}).$$
	For $s \in [\kappa_\alpha]^\ell$, let 
	$$f(s) = \tau^L(s ^\frown \la \kappa_\alpha,\dots,\kappa_{\alpha+ n +1}\ra)$$
	if this is a maximal antichain in $\mathbb P(\kappa_\alpha,\kappa_{\alpha+1})$, and $f(s) = \mathbb P(\kappa_\alpha,\kappa_{\alpha+1})$ otherwise.  By the distributivity condition (\ref{item:distrib}) of goodness, $L$ has a maximal antichain $B \subseteq \mathbb P(\kappa_\alpha,\kappa_{\alpha+1})$ that refines $f(s)$ for all $s \in [\kappa_\alpha]^\ell$.  Let $p^* \in B \cap G$. Now $j(B)$ refines $j(f)(s)$ for all $s\in[\kappa_\beta]^{\ell}$, and in particular, refines $j(f)(\kappa_{i_0},\ldots,\kappa_{i_{\ell-1}})=A$. Therefore $j(p^*)\in j[G]$
	is below some condition in $A$, as desired.
\end{proof}

\begin{lemma}\label{lem:kappa-cc}
	Let $\PP$ be good and let $\kappa$ be an $L$-indiscernible.
	Then $L\sats$``$\PP(0,\kappa)$ is $\kappa$-cc''.
	\end{lemma}
\begin{proof}
	We may assume $\kappa=\kappa_\om$, by indiscernibility.
	Let $A\subseteq\PP(0,\kappa_\om)$ be a maximal antichain in $L$.
	Then there is a Skolem term $\tau$ and $m,n<\omega$ such that
	\[ A=\tau^L(\kappa_0,\ldots,\kappa_m,\kappa_\omega,\ldots,\kappa_{\omega+n}).\]
	Let $j:L\to L$ be elementary with $\crit(j)=\kappa_{m+1}$ and $j(\kappa_{m+i+1})=\kappa_{\om+i}$ for $i<\omega$. Then $A\in\rg(j)$.
	Let $j(\bar{A})=A$. Note that $\bar{A}=A\cap L_{\kappa_{m+1}}=A\cap\PP(0,\kappa_{m+1})$.
	And by elementarity, $\bar{A}$ is a maximal antichain of $\PP(0,\kappa_{m+1})$.
	But $\PP(0,\kappa_{m+1})\times\PP(\kappa_{m+1},\kappa_\om)\cong\PP(0,\kappa_\om)$
	as witnessed by $\pi$ as in the factoring condition.
	It follows that $\bar{A}=A$, which suffices.
	\end{proof}

\subsection{Two mutually embeddable models}
We first prove part (\ref{item:mutual_pair}) of Theorem \ref{sec:0^sharp}. Let $\PP$ be good.
Let $j : L \to L$ be the minimal nontrivial embedding from $L$ to itself, the one that sends $\kappa_n$ to $\kappa_{n+1}$ for finite $n$, and sends all other $\kappa_i$ to themselves.
Let $G_0$ be 
$\mathbb P(0,\kappa_0)$-generic over $L$, and let $G_1$ and $H_1$ be mutually 
$\mathbb P(\kappa_0,\kappa_1)$-generic, so that $L[G_1] \nsubseteq L[H_1] 
\nsubseteq L[G_1]$.  Since $\mathbb P(\kappa_0,\kappa_1)$ adds no subsets of $\mathbb P(0,\kappa_0)\sub L_{\kappa_0}$, $G_1$ and $H_1$ are both generic over 
$L[G_0]$.

Using Lemma \ref{gengen}, let $G_{n+1}$ be the filter generated by $j[H_n]$, and let 
$H_{n+1}$ be 
the filter generated by  $j[G_n]$.  Each is $\mathbb 
P(\kappa_{n},\kappa_{n+1})$-generic over $L$.

By goodness,
we can identify $\PP(0,\kappa_n)$ with $\PP(0,\kappa_0)\times\PP(\kappa_0,\kappa_1)\times\ldots\times\PP(\kappa_{n-1},\kappa_n)$, and $\PP(0,\kappa_\om)$ with the direct limit of these partial orders
(for the latter, consider embeddings $k:L\to L$ with $\crit(k)=\kappa_n$
and $k(\kappa_n)=\kappa_\om$).
So let
$$G = \{ p_0\!^\frown p_1 \!^\frown \dots \!^\frown p_n : n \in \omega 
\wedge \forall i \leq n (p_i \in G_i) \},$$ and let $$H = \{ p_0\!^\frown p_1 
\!^\frown \dots \!^\frown p_n : n \in \omega \wedge p_0 \in G_0 \wedge \forall i 
\leq n ( i >0 \rightarrow p_i \in H_i) \}.$$ 
For each $n<\omega$, $G \cap 
\mathbb P(0,\kappa_n)$ is generic over $L$, since the filters $G_i$ are mutually 
generic by distributivity and cardinality considerations.
The same holds for 
$H$.

Since $L\sats$``$\PP(0,\kappa_\om)$ is $\kappa_\om$-cc'' by Lemma \ref{lem:kappa-cc},
it follows that $G$ and $H$ are in fact $\PP(0,\kappa_\om)$-generic over $L$.
By construction, $j[G] \subseteq H$ and 
$j[H] \subseteq G$.  Thus the embedding can be lifted in both directions.
This completes clause (a) of part (\ref{item:mutual_pair}) of the theorem.

Now consider clause (b).
Suppose further that $L\sats$``$\PP(\kappa_\alpha,\kappa_\beta)$ is $\kappa_\alpha^+$-closed'' for all  $\alpha<\beta$; note this holds in the
example of a good $\PP$ given earlier. Then note that $\PP^2$ is good. Choose $(G_1,H_1)$ earlier such that it is $\PP(\kappa_0,\kappa_1)^2$-generic over $L$, and hence over $L[G_0]$. By goodness applied to $\PP^2$, each $(G_{n+1},H_{n+1})$ is then $\PP(\kappa_n,\kappa_{n+1})^2$-generic over $L$, and as before,
$(G,H)$ is $\PP(0,\kappa_\om)^2$-generic. This gives that $L[G],L[H]$ are mutually 
generic over $L[G_0]$, and note that $j,k$ extend uniquely elementarily to $j^+:L[G,H]\to L[G,H]$ 
with $j^+(G,H)=(H,G)$ as desired.

Since $0^\sharp$ is definable and yields a definable enumeration
of all the relevant objects in ordertype $\om$, the constructions
above can moreover be done definably.

\subsection{Arbitrarily many models}

To construct larger systems of mutually embeddable models under the assumption that $0^\sharp$ exists, we must first show the existence of the desired filters that are generic over $L$ for uncountable partial orders.
This is handled by the following, which is proven in the same way as Proposition 4.9 in \cite{constructibility_and_class_forcing}.

\begin{lemma}
\label{genexist}
Suppose $0^\sharp$ exists, $\kappa_i$ is the $i^{th}$ Silver indiscernible, and $\mathbb P \in L$ is a partial order such that $L \models \mathbb P$ is $\kappa_i^+$-distributive and of size $< \kappa_{i+\omega_1}$.  Then there is an $(L,\mathbb P)$-generic filter.
\end{lemma}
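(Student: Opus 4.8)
The plan is to realize the generic filter as the downward closure of a decreasing sequence of conditions meeting every dense subset of $\mathbb P$ that lies in $L$, and the whole argument is to run parallel to Proposition 4.9 of \cite{constructibility_and_class_forcing}. First I would bound the number of such dense sets. Since $\mathbb P \in L_{\kappa_{i+\omega_1}}$ and each Silver indiscernible is inaccessible (indeed far more) in $L$, the cardinal $\kappa_{i+\omega_1}$ is a strong limit in $L$; together with the GCH in $L$ this gives $|\mathcal P(\mathbb P)^L| < \kappa_{i+\omega_1}$, so there are fewer than $\kappa_{i+\omega_1}$ dense open subsets of $\mathbb P$ in $L$. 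Because $\kappa_{i+\omega_1} = \sup_{\beta<\omega_1}\kappa_{i+\beta}$, these dense sets stratify into an increasing continuous union $\bigcup_{\beta<\omega_1}\mathcal D_\beta$, where $\mathcal D_\beta$ collects those dense sets whose canonical Skolem-term representation (from the representation lemma above) uses only indiscernibles below $\kappa_{i+\beta}$.

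Second, I would build the filter by recursion on $\beta<\omega_1$, producing a descending sequence of conditions $p_\beta$ (together with auxiliary dense open sets computed inside $L$) such that $p_\beta$ lies below a member of every $D \in \mathcal D_\beta$. The engine of each step is the mechanism of Lemma \ref{gengen}: for a fixed Skolem term and a fixed configuration of the low indiscernible parameters, the dense sets obtained by varying the remaining, higher, parameters are all refined simultaneously by a single dense open set of $L$ drawn from a family indexed by $[\kappa_i]^{<\omega}$ via the $\kappa_i^+$-distributivity of $\mathbb P$, and this refinement is then transported upward along an indiscernible-preserving $j\colon L\to L$. Ranging over the countably many Skolem terms and assembling across the $\omega_1$ layers yields a decreasing sequence cofinal through all of $\bigcup_{\beta<\omega_1}\mathcal D_\beta$, whose downward closure is the desired $(L,\mathbb P)$-generic filter.

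The step I expect to be the main obstacle is passing the limit stages of this recursion. The hypothesis on $\mathbb P$ is only $\kappa_i^+$-\emph{distributivity}, not $\kappa_i^+$-\emph{closure}, so a descending sequence of conditions need not have a lower bound, and a naive construction stalls. The resolution, exactly as in Proposition 4.9 of \cite{constructibility_and_class_forcing}, is to keep the bookkeeping at the level of dense open sets of $L$ — where $\kappa_i^+$-distributivity does furnish common refinements of $\le\kappa_i$-sized families — and to use the indiscernibility supplied by $0^\sharp$ to absorb each tail of high parameters into a single such refinement, so that no stage of the recursion ever confronts more than $\kappa_i$ dense sets at once and the $\omega_1$ partial filters cohere into one. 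Verifying that this organization meets every dense set and that the intermediate objects genuinely remain in $L$ is the technical heart of the argument.
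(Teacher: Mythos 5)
Your sketch correctly identifies the crux — limit stages of a length-$\omega_1$ recursion in a poset that is only distributive — but the resolution you offer does not actually address it, and this is a genuine gap rather than a detail. Keeping the bookkeeping ``at the level of dense open sets of $L$'' controls how many dense sets each stage must handle (at most $\kappa_i$, via distributivity applied to $L$-families indexed by $[\kappa_i]^{<\omega}$), but the obstruction at a limit $\lambda$ is not the number of dense sets: it is that the conditions $p_\beta$, $\beta<\lambda$, chosen at earlier stages must admit a common lower bound before stage $\lambda$ can do anything at all, and $\kappa_i^+$-distributivity supplies no such bound. Nor can the ``$\omega_1$ partial filters cohere'' in any useful sense: an increasing union of filters is a filter, but to meet a single new dense set while remaining a filter one again needs a condition below (cofinally many of) the conditions already committed to, i.e.\ exactly the missing lower bound. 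The problem is not hypothetical: in $L$ there are normal $\kappa_i^+$-Suslin trees (from $\diamondsuit$ and $\square$ in $L$), which are $\kappa_i^+$-distributive posets of $L$-size $<\kappa_{i+\omega_1}$ in which freely chosen descending $\omega$-sequences typically have no lower bound; for such a $\mathbb{P}$ the lemma is true, but its generic (a cofinal branch) cannot be produced by a recursion making unconstrained ($L$-least-in-some-dense-set) choices. A smaller but telling sign of the same confusion: you have Lemma \ref{gengen}'s mechanism backwards. There, distributivity absorbs the \emph{low} parameters (the family indexed by $[\kappa_i]^{<\omega}$, which is in $L$ and has size $\kappa_i$); it cannot absorb the higher parameters, since the $L$-family obtained by letting them range over ordinals has size $\kappa_{i+\omega_1}$, while the family indexed by indiscernibles alone is not in $L$. (Also, a filter is the \emph{upward} closure of a descending sequence, not the downward closure.)

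The idea that makes the argument of Proposition 4.9 of \cite{constructibility_and_class_forcing} work, and which is absent from your sketch, is an indiscernibility \emph{transfer} that eliminates transfinite recursion altogether. Arrange (passing to an isomorphic copy on an ordinal $\delta<\kappa_{i+\omega_1}$) that every dense set has a canonical form $t^L(\vec a,\vec m,\vec T)$ with $\vec a$ below $\kappa_i$, $\vec m$ among the indiscernibles in $[\kappa_i,\kappa_{i+\omega_1})$, and $\vec T$ a standardized tail (the ``moreover'' clause of the representation lemma). Absorb $\vec a$ by $\kappa_i^+$-distributivity as in Lemma \ref{gengen}. Then build a \emph{single} descending $\omega$-sequence $r_0\geq r_1\geq\cdots$ of $L$-least conditions, with the discipline that the indiscernibles appearing in the canonical representation of each $r_n$ avoid the uncountable stretch of middle indiscernibles: the test parameters $\vec m$ treated at step $n$ are taken among fresh indiscernibles disjoint from that stretch, while the only middle indiscernibles that cannot be so avoided — those at or below the finitely many middle parameters of $\mathbb{P}$ itself, of which there are only countably many since that position has countable index — are enumerated directly into the $\omega$-sequence; this is precisely where the bound $\kappa_{i+\omega_1}$ (as opposed to an arbitrary bound) is used. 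Indiscernibility then transfers ``$r_n$ lies in the dense set with test parameters'' to ``$r_n$ lies in the dense set with parameters $\vec m$'' for \emph{every} tuple $\vec m$ from the avoided region, because the two statements differ by an order-isomorphism of indiscernibles fixing everything in the definition of $r_n$. Hence the filter generated by the countable chain is already $(L,\mathbb{P})$-generic, and no limit stage ever arises. Without this device — keeping the conditions definable from indiscernibles outside the region to be swept, so that one instance per pattern yields all instances — your $\omega_1$-recursion cannot be completed.
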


Suppose $\PP$ is good.  Given any finite sequence of ordinals $i_0<\dots<i_n$, there is a canonical isomorphism $\sigma : \PP(0,\kappa_{i_0}) \times \dots \times \PP(\kappa_{i_{n-1}},\kappa_{i_n}) \to \PP(0,\kappa_{i_n})$ given by the following procedure:  Take the $L$-least isomorphism $\pi : \PP(0,\kappa_{i_{n-1}}) \times \PP(\kappa_{i_{n-1}},\kappa_{i_n}) \to \PP(0,\kappa_{i_n})$ witnessing the factoring condition (\ref{item:factoring}), then do the same for $\PP(0,\kappa_{i_{n-1}})$, repeat down to $\PP(0,\kappa_{i_1})$, and compose.  

By the above lemma, for every successive pair of Silver indiscernibles $\kappa_i<\kappa_{i+1}$, there exists a filter $G_i$ that is $\mathbb P(\kappa_i,\kappa_{i+1})$-generic over $L$.  To build an $(L,\PP(\mu,\kappa))$-generic filter for arbitrary indiscernibles $\mu<\kappa$, we use the following lemma.  Let us use the notational convention that $\PP(\kappa,\kappa)$ is the trivial forcing. 

\begin{lemma}
\label{buildbiggen}
Suppose $\kappa$ is a Silver indiscernible and $p \in \PP(0,\kappa)$.  Then there is a finite sequence of ordinals $i_0<\dots<i_n$ such that $\kappa = \kappa_{i_n}$, and if 
$$\sigma : \PP(0,\kappa_0) \times \PP(\kappa_0,\kappa_{i_0}) \times \prod_{0\leq m <n} \left( \PP(\kappa_{i_m},\kappa_{(i_m)+1}) \times \PP(\kappa_{(i_m)+1},\kappa_{i_{(m+1)}}) \right)
\to \PP(0,\kappa_{i_n})$$
is the canonical isomorphism, then there is a sequence $\la p_{-1},p_0,\dots,p_{n-1} \ra$, with $p_{-1} \in \PP(0,\kappa_0)$ and $p_m \in \PP(\kappa_{i_m},\kappa_{i_m+1})$, such that $p = \sigma(p_{-1},1,p_0,1,\dots,p_{n-1},1)$.

Furthermore, suppose that $G_{-1}$ is $(L,\PP(0,\kappa_0))$-generic and for every ordinal $i$, $G_i$ is $(L,\PP(\kappa_i,\kappa_{i+1}))$-generic.  Let $G$ be the class of $p$ such that for some sequence $\la p_{-1},p_0,\dots,p_{n-1} \ra$ with $p_i \in G_i$, $p = \sigma(p_{-1},1,p_0,1,\dots,p_{n-1},1)$ as above.  Then $G$ is a $\PP(0,\ord)$-generic filter over $L$.
\end{lemma}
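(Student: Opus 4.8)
The plan is to derive both parts from the Skolem representation (the displayed Lemma) together with one collapsing property of hulls of Silver indiscernibles, which I would first isolate as a preliminary fact: if $x$ lies in the $L$-definable Skolem hull of a set $J$ of indiscernibles, if $\mu=\max(J\cap\nu)$ for indiscernibles $\mu<\nu$, and if $\rank(x)<\nu$, then in fact $\rank(x)<\mu^+$, where $\mu^+$ denotes the next indiscernible after $\mu$. The proof has two steps. First, since $\rank(x)<\nu\le\min(J\setminus\nu)$, the standard indiscernibility principle that a Skolem value lying below its least ``high'' argument is independent of all the high arguments lets me drop every generator $\ge\nu$, so that $x$ belongs to the hull of $J\cap(\mu+1)$. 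Second---and this is the crucial point that makes the construction work---the passage from an indiscernible to the next one is not $L$-definable; concretely $L_{\mu^+}\prec L$, so any $x$ that is $L$-definable from ordinals $\le\mu<\mu^+$ already lies in $L_{\mu^+}$, i.e. $\rank(x)<\mu^+$.

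For the first assertion I would fix $p\in\PP(0,\kappa)$ and, using $\rank(p)<\kappa$ and the moreover-clause of the displayed Lemma, write $p=\tau^L(\kappa_{i_0},\dots,\kappa_{i_{n-1}},\vec\lambda)$ where $\kappa_{i_0}<\dots<\kappa_{i_{n-1}}\le\rank(p)<\kappa=\kappa_{i_n}$ and $\vec\lambda$ are indiscernibles $\ge\kappa$ (every generator exceeding $\rank(p)$ having been moved to lie $\ge\kappa$). I take $i_0<\dots<i_n$ as the required sequence, and compute $\sigma^{-1}(p)$ by peeling the blocks off the top one at a time, using the $L$-least factoring maps $\pi$ (recall $\pi(x,1)=x$). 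The engine, applied at each stage, is this: when the running condition is the projection $p^{(m+1)}$ of $p$ into $\PP(0,\kappa_{i_{m+1}})$, this $p^{(m+1)}$ is $L$-definable from $p$ together with the boundary indiscernibles used so far, all of which are $\ge\kappa_{i_{m+1}}$; hence its generators below $\kappa_{i_{m+1}}$ are precisely the active ones $\le\kappa_{i_m}$, and the preliminary fact gives $\rank(p^{(m+1)})<\kappa_{(i_m)+1}$, i.e. $p^{(m+1)}\in\PP(0,\kappa_{(i_m)+1})$. Since $\pi(p^{(m+1)},1)=p^{(m+1)}$, peeling the connecting block $\PP(\kappa_{(i_m)+1},\kappa_{i_{m+1}})$ yields coordinate $1$; peeling the active block $\PP(\kappa_{i_m},\kappa_{(i_m)+1})$ then yields $p_m$, and the recursion continues down to the bottom, where the projection into $\PP(0,\kappa_{i_0})$ has only generators $\ge\kappa_{i_0}$, so lands in $\PP(0,\kappa_0)$ and supplies $p_{-1}$. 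This never requires the canonical isomorphisms at different granularities to cohere: the rank bound on each running projection is read off directly from its definition.

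For the ``Furthermore'', $G$ is a filter by the first assertion, since any two of its conditions decompose, over a common refinement of their index-sequences, into single-step pieces lying in the filters $G_i$ and $G_{-1}$, so that compatibility and upward closure reduce to the corresponding facts about those filters. For genericity it suffices that $G$ meet every maximal antichain $A\in L$ of $\PP(0,\OR)$; by the $\kappa$-cc (Lemma \ref{lem:kappa-cc}) and the factoring condition, such an $A$ is a maximal antichain of $\PP(0,\kappa)$ once $\kappa$ is a large enough indiscernible. I would then prove by induction on the index of $\kappa$ that $G\cap\PP(0,\kappa)$, which is the product of the single-step generics below $\kappa$, is $\PP(0,\kappa)$-generic over $L$. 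At a limit stage this is immediate from the induction hypothesis: writing $A=\tau^L(\cdots)$ and reflecting its generators as in Lemma \ref{lem:kappa-cc}, $A$ is already a maximal antichain of a strictly smaller level $\PP(0,\kappa_{(a_r)+1})$, where $\kappa_{a_r}$ is the largest generator of $A$. At a successor stage $\kappa=\kappa_{i'+1}$ one factors $\PP(0,\kappa_{i'+1})\cong\PP(0,\kappa_{i'})\times\PP(\kappa_{i'},\kappa_{i'+1})$ and shows $G_{i'}$ is generic over the extension by $G\cap\PP(0,\kappa_{i'})$: given a name for a dense subset of the single-step forcing, the $\kappa_{i'}$-cc of $\PP(0,\kappa_{i'})$ reduces the task to meeting an intersection of fewer than $\kappa_{i'}$ many dense open subsets of $\PP(\kappa_{i'},\kappa_{i'+1})$, which is possible by its $\kappa_{i'}^+$-distributivity. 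This is the only place the distributivity clause of goodness is used, and it plays exactly the role of the mutual-genericity step in the two-model construction of this section.

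The step I expect to be most delicate is the triviality of the connecting-block coordinates in the first assertion. The naive rank estimate, which treats the block endpoint $\kappa_{(i_m)+1}$ as an available parameter, only yields $\rank(p^{(m+1)})<\kappa_{(i_m)+2}$ and would leave a potentially nontrivial coordinate in the lowest single step of the connecting block. Avoiding this requires noticing that $p^{(m+1)}$ is definable \emph{without} $\kappa_{(i_m)+1}$---its generators below $\kappa_{i_{m+1}}$ are exactly the active indiscernibles $\le\kappa_{i_m}$---so that the sharp bound $\rank(p^{(m+1)})<\kappa_{(i_m)+1}$ applies; and this in turn rests essentially on the non-$L$-definability of the successor-indiscernible operation, via $L_{\mu^+}\prec L$ in the preliminary fact. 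The careful bookkeeping of exactly which indiscernibles each running projection depends on is where the argument must be watched most closely.
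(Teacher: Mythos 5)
Your ``Furthermore'' part is essentially the paper's own argument: the same induction along the indiscernibles, with the limit step handled by Lemma \ref{lem:kappa-cc} and the successor step a standard mutual-genericity argument (you run it in the direction ``$G_{i'}$ is generic over the extension by $G\cap\PP(0,\kappa_{i'})$'' via the chain condition plus $\kappa_{i'}^+$-distributivity, while the paper runs it in the opposite direction, using distributivity to see that $L[G_{i'}]$ has no new maximal antichains of the small forcing); that part is fine. The first assertion, however, is where your route genuinely diverges, and there it contains a false step. Your engine is the ``preliminary fact'', and its proof breaks at step 1: from independence of the value $\tau^L(\vec a,\vec b)$ from the high arguments $\vec b$, you conclude that $x$ lies in the Skolem hull of the low generators $J\cap(\mu+1)$ alone. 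That implication fails. For a counterexample, let $x$ be the least $\delta>\kappa_0$ such that $L_\delta\prec L_{\kappa_1}$ (such $\delta<\kappa_1$ exist, since these $\delta$ are club in $\kappa_1$ by a L\"owenheim--Skolem argument inside $L$). Then $x=\tau^L(\kappa_0,\kappa_1)$ for an $L$-definable Skolem term, $\rank(x)<\kappa_1$, and the value is indeed independent of the high argument; so with $J=\{\kappa_0,\kappa_1\}$, $\nu=\kappa_1$, $\mu=\kappa_0$, your step 1 asserts $x\in\mathrm{Hull}^L(\{\kappa_0\})$. But every element of $\mathrm{Hull}^L(\{\kappa_0\})$ is definable in $L$ from $\kappa_0$, hence (as $L_{\kappa_1}\prec L$) definable in $L_{\kappa_1}$ from $\kappa_0$, hence lies in $L_x$ because $L_x\prec L_{\kappa_1}$ and $\kappa_0\in L_x$; so $x\notin\mathrm{Hull}^L(\{\kappa_0\})$. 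Since your step 2 (the appeal to $L_{\mu^+}\prec L$) is applied precisely to this false premise, the preliminary fact is not proved as written, and the entire peeling argument rests on it.

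The good news is that the preliminary fact itself is true, and your peeling argument only ever uses its conclusion (the rank bound), never the hull membership, so the gap is local and repairable. A correct proof: the statement ``$\rank(\tau(\vec v,\vec w))<\min(\vec w)$'' is a first-order property of the increasing tuple of indiscernibles $(\vec a,\vec b)$, so by indiscernibility it also holds at $(\vec a,\vec u)$, where $\vec u$ is a block of indiscernibles beginning at the next indiscernible past $\mu$; thus $y=\tau^L(\vec a,\vec u)$ has rank below that indiscernible, and the moreover-clause of the Skolem lemma (applied to $y$, whose rank is below $\min(\vec u)$) lets you replace $\vec u$ by $\vec b$ without changing the value, so $y=x$ and the bound transfers to $x$. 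With that repair your part 1 goes through, including the boundary case at the bottom of the recursion. Even so, it is worth noting that the paper avoids all of this: it proves the decomposition by a soft induction using only goodness --- at a limit $\lambda$, since $\PP(0,\kappa_\lambda)\subseteq L_{\kappa_\lambda}$, the class of indiscernibles is closed, and $\PP(0,\kappa_i)=\PP(0,\kappa_\lambda)\cap L_{\kappa_i}$, every $p\in\PP(0,\kappa_\lambda)$ already lies in some $\PP(0,\kappa_i)$ with $i<\lambda$ and $p=\pi(p,1)$ --- so no Skolem-term analysis of conditions is needed at all.
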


\begin{proof}
The first claim is established by induction.  If it is true for $\kappa_i$, then it easily follows for $\kappa_{i+1}$.  Suppose it holds for all $i<\lambda$, where $\lambda$ is a limit.  For $p \in \PP(0,\kappa_\lambda)$, let $i$ be the least such that $p \in \PP(0,\kappa_i)$.  Then if $\pi : \PP(0,\kappa_i) \times \PP(\kappa_i,\kappa_\lambda) \to \PP(0,\kappa_\lambda)$ is the isomorphism, then $p = \pi(p,1)$, and the desired decomposition holds by induction.

For the second claim, we show by induction that $G \cap \PP(0,\kappa)$ is generic over $L$ for every indiscernible $\kappa$.  If this is true for $\kappa_i$, then since $\PP(0,\kappa_i) \subseteq L_{\kappa_i}$ and $\PP(\kappa_i,\kappa_{i+1})$ is $\kappa_i^+$-distributive, $G \cap\PP(0,\kappa_i)$ is generic over $L[G_i]$, and thus $(G \cap \PP(0,\kappa_i))\times G_i$ is generic.  For limit $i$, genericity follows by the $\kappa_i$-c.c., established in Lemma \ref{lem:kappa-cc}.
\end{proof}

%
%

We want to build a system of $2^\Omega$-many mutually embeddable models, where $\Omega = \ord$.  Formally, we should think of the construction as either working up to a strong limit cardinal $\Omega$, or as being carried out with the help of a global choice function and producing a correspondence between class functions $f \in 2^\Omega$ and models of the form $L[G_f]$, which will be mutually embeddable.  We will choose generics for the forcings $\mathbb P(\kappa_i,\kappa_{i+1})$, either using Lemma \ref{genexist}, or by taking images of such generics using Lemma \ref{gengen}.  We will additionally assume moreover that for every pair of indiscernibles $\mu<\kappa$, $\PP(\mu,\kappa)^2$ is $\mu^+$-distributive in $L$.

Recursively define a class function $F : 2^{<\Omega} \to \Omega^{<\Omega}$ with the following properties:
\begin{enumerate}
 \item $\dom(F(\sigma))=\dom(\sigma)$, 
 \item $F(\sigma)$ is strictly increasing,
\item For every strong limit cardinal $\lambda$, $\bigcup_{\sigma \in 2^{<\lambda}} \ran(F(\sigma))$ is the set of odd ordinals below $\lambda$.  ($\alpha$ is \emph{odd} when it is of the form $\beta+2n+1$, where $\beta$ is a limit ordinal and $n<\omega$.  Otherwise, it is \emph{even}.)
\item if 
$\sigma\sub\tau$ then  $F(\sigma)\sub F(\tau)$,
and
\item if $\sigma\not\sub\tau\not\sub\sigma$
and $\alpha$ is least such that $\sigma(\alpha)\neq\tau(\alpha)$
then
\[ \ran(F(\sigma))\inter\ran(F(\tau))=F(\sigma)[\alpha]=F(\tau)[\alpha].\]
\end{enumerate}

Let $H \subseteq \mathbb P(0,\kappa_0)$ be generic over $L$.  Next, construct a sequence of pairs of generic filters, $\la (G_\alpha^0,G_\alpha^1) : \alpha \in \Omega \ra$.  If $\alpha$ is even, let $G_\alpha^0,G_\alpha^1$ be mutually $\mathbb P(\kappa_\alpha,\kappa_{\alpha+1})$-generic over $L$.  This is possible because of Lemma \ref{genexist} and the assumption that $\mathbb P(\kappa_\alpha,\kappa_{\alpha+1})^2$ is $\kappa_\alpha^+$-distributive in $L$.

For the generics at odd levels, we take images of previously chosen generics under certain elementary embeddings.  For $\sigma \in 2^{<\Omega}$, let $j_\sigma : L \to L$ be the elementary embedding generated by mapping $\kappa_i$ to $\kappa_{F(\sigma)(i)}$ for $i \in \dom(\sigma)$, and mapping $\kappa_i$ to the least possible indiscernible for larger $i$.  If $\alpha$ is an odd ordinal, let $\tau\in 2^{<\Omega}$ be the shortest sequence such that $\alpha \in \ran(F(\tau))$, and let $\beta=\max(\dom\tau)$.  If $\alpha$ is of the form $\lambda +2n+1$, then $\beta \leq \lambda+n<\alpha$.  Let $G^0_\alpha = G^1_\alpha$ be the filter generated by $j_\tau[G_\beta^{\tau(\beta)}]$.  It follows from Lemma \ref{gengen} that this filter is $\mathbb P(\kappa_\alpha,\kappa_{\alpha+1})$-generic over $L$.

For any class function $f : \Omega \to 2$, we get, using Lemma \ref{buildbiggen}, a class-generic extension $L[G_f]$ of $L$ for the class forcing $\mathbb P(0,\Omega)$, generated by:
$$\{ p_0\!^\frown p_1 \!^\frown \dots \!^\frown p_{n} : p_0 \in H, \text{ and for some }\alpha_1<\dots<\alpha_{n}, p_i \in G_{\alpha_i}^{f(\alpha_i)} \text{ for } 1\leq i \leq n \}$$
If $f \not= g$ are functions that disagree at some even ordinal $\alpha$, then $G_\alpha^{f(\alpha)},G_\alpha^{g(\alpha)}$ are mutually generic, so $L[G_f] \nsubseteq L[G_g]$.

To see that $L[G_f]$ elementarily embeds into $L[G_g]$, let $j_f : L \to L$ be defined by $j_f(x) = \lim_{\alpha \to \Omega} j_{f \rest \alpha} (x)$.  This map lifts to the desired embedding, since for every $\alpha$ and $p \in G_\alpha^{f(\alpha)}$, $j_f(p) \in j_{f \rest( \alpha+1)}[G_\alpha^{f(\alpha)}] \subseteq G_\beta^0=G_\beta^1$, where $\beta$ is the top ordinal in the range of $F(f\rest(\alpha+1))$.

This concludes the proof of Theorem \ref{main0sharp}.
\end{proof}

\begin{remark}
The construction of the tree up to height $\omega$ can be done definably in $L[0^\sharp]$.  The reals of $V$ determine a system of mutually embeddable models.
\end{remark}

\subsection{Forcing ``$V=\HOD$''}

There are many ways to code information while forcing so that the extension satisfies that every set is ordinal-definable.  Many of these already fit into our scheme for producing mutually-embeddable models.  Known mechanisms include coding into the GCH pattern \cite{MR292670}, coding into the $\diamondsuit^*$ pattern \cite{MR2518815}, coding into stationarity \cite{MR2548481}, and coding into a splitting property \cite{MR4013973}, and possibly others.  Let us describe the abstract features that are needed to get mutually embeddable models of ``$V=\HOD$''.

As in the examples discussed before, let $\mathcal C$ be a class of ordinals defined in $L$ and contained in the $L$-inaccessibles.  To simplify notation, let $\theta_\alpha$ denote $(\alpha^{+\mathcal C})^+$, the $L$-successor cardinal of the least ordinal in $\mathcal C$ above $\alpha$.   For $\alpha \in \mathcal C$, we require that $\mathbb Q(\alpha)$ is a nontrivial forcing in $L$ defined from $\alpha$ such that $L\models \mathbb Q(\alpha)$ is $\alpha^+$-closed and of cardinality $\leq \alpha^{+\mathcal C}$.  Furthermore, we require that there is a formula $\varphi(x,y)$ such that for all $\alpha \in \mathcal C$, the following holds:

\begin{itemize}
\item[$(*)$] Whenever $\mathbb S$ is a forcing in $L$ of cardinality $\leq\alpha$, and $G \times H \subseteq \mathbb Q(\alpha) \times \mathbb S$ is generic over $L$, then $G$ is the unique object $y$ such that $L_{\theta_\alpha}[G \times H] \models \varphi(\alpha,y)$.
\end{itemize}
This is made possible by coding $G$ into some information about ordinals between $\alpha$ and $\alpha^{+\mathcal C}$ that cannot be altered by small forcing, such as the GCH pattern.

Let $\mathbb{P}(\kappa,\delta)$ be  the Easton-support product of the $\mathbb Q(\alpha)$'s for $\kappa \leq \alpha < \delta$. Then the extension will satisfy ``$V=\HOD$''.  To prove this, it suffices to show: 
\begin{claim}
If $G \subseteq \mathbb P(\kappa,\delta)$ is generic over $L$, then $G$ is definable in $L[G]$.
\end{claim}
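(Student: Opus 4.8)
The plan is to prove the claim by induction on $\delta$, showing that $G$ is uniformly definable over $L[G]$ using the formula $\varphi$ supplied by hypothesis $(*)$. The key point is that each coordinate forcing $\mathbb Q(\alpha)$ codes its own generic into a feature (such as the GCH pattern) that is robust against small forcing, so we can recover each piece $G(\alpha)$ locally and then assemble the whole filter. First I would set up the notation: for $\alpha \in \mathcal C$, write $G(\alpha)$ for the restriction of $G$ to the $\mathbb Q(\alpha)$-coordinate, and observe that by the Easton-support product structure we can factor $\mathbb P(\kappa,\delta) \cong \mathbb P(\kappa,\alpha) \times \mathbb Q(\alpha) \times \mathbb P(\alpha+1,\delta)$ for each $\alpha \in \mathcal C \cap [\kappa,\delta)$.

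The heart of the argument is the recovery of a single coordinate. Fix $\alpha \in \mathcal C$. I would argue that $G(\alpha)$ is definable over $L_{\theta_\alpha}[G]$ by the formula $\varphi(\alpha, \cdot)$. To see this, apply the factoring: below $\alpha$, the forcing $\mathbb P(\kappa,\alpha)$ has cardinality $\leq \alpha$ (since the $\mathbb Q(\beta)$ for $\beta < \alpha$ live below $\alpha^{+\mathcal C} \leq \alpha$ in the relevant sense, and Easton support keeps the product small), so it plays the role of the ``small'' forcing $\mathbb S$ in $(*)$. Above $\alpha$, the tail $\mathbb P(\alpha+1,\delta)$ is $\theta_\alpha$-closed, hence adds no new subsets of $L_{\theta_\alpha}[G(\alpha) \times (G \rest \mathbb P(\kappa,\alpha))]$; therefore $L_{\theta_\alpha}[G] = L_{\theta_\alpha}[G(\alpha) \times G_{<\alpha}]$ computes the same satisfaction of $\varphi(\alpha,y)$, and $(*)$ tells us $G(\alpha)$ is the unique witness. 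The main obstacle I anticipate is verifying precisely that $\mathbb P(\kappa,\alpha)$ has the bounded cardinality needed to invoke $(*)$ with $\mathbb S = \mathbb P(\kappa,\alpha)$, and that the tail closure genuinely prevents the tail generic from interfering with the level-$\theta_\alpha$ structure; this requires the Easton-support bookkeeping and the closure hypothesis $L \models \mathbb Q(\alpha)$ is $\alpha^+$-closed to combine correctly across the product.

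Once each coordinate $G(\alpha)$ is shown definable over $L[G]$ uniformly from the parameter $\alpha$, I would finish by noting that $G$ is determined by the class function $\alpha \mapsto G(\alpha)$ (for $\alpha \in \mathcal C \cap [\kappa,\delta)$), since by the product structure $p \in G$ if and only if each coordinate of $p$ lies in the corresponding $G(\alpha)$ and the support is Easton. Because $\mathcal C$ is $L$-definable without parameters and the map $\alpha \mapsto G(\alpha)$ is uniformly definable over $L[G]$ via $\varphi$, the whole filter $G$ is definable over $L[G]$, which is what the claim asserts. This in turn yields that $L[G] \models$``$V = \HOD$'', since every set in $L[G]$ is ordinal-definable from $G$, and $G$ is itself ordinal-definable, completing the reduction indicated before the claim.
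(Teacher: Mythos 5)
Your proof is correct and follows essentially the same route as the paper's: factor $\mathbb P(\kappa,\delta) \cong \mathbb P(\kappa,\alpha)\times\mathbb Q(\alpha)\times\mathbb P(\alpha+1,\delta)$, apply $(*)$ with $\mathbb S=\mathbb P(\kappa,\alpha)$, use the $\theta_\alpha$-closure of the tail so that the level-$\theta_\alpha$ structure of the full extension (the paper phrases this as $L_{\theta_\alpha}[G_{\kappa,\alpha+1}]=H_{\theta_\alpha}^{L[G]}$) correctly identifies each coordinate generic, and then reassemble $G$ from its uniformly definable coordinates. The only differences are cosmetic: you wrote $\alpha^{+\mathcal C}\leq\alpha$ where you meant $\beta^{+\mathcal C}\leq\alpha$ for $\beta<\alpha$, and your final definition of $G$ uses the ordinal parameters $\kappa,\delta$ (which suffices for ``$V=\HOD$''), whereas the paper adds one further observation --- that $L[G]$ has no generics over $L$ for nontrivial forcings of distributivity greater than $\delta$ --- to make the definition parameter-free.
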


\begin{proof}
For $\kappa \leq \alpha < \beta \leq \delta$, let $G_{\alpha,\beta} = G \cap \mathbb P(\alpha,\beta)$, and let $G_\alpha = G_{\alpha,\alpha+1}$.  Note that for $\alpha \in \mathcal C \cap [\kappa,\delta)$, $\mathbb P(\kappa,\delta) \cong \mathbb P(\kappa,\alpha) \times \mathbb Q(\alpha) \times \mathbb P(\alpha+1,\delta)$.  By our assumption $(*)$, $G_\alpha$ is the unique object $y$ such that $L_{\theta_\alpha}[G_{\kappa,\alpha+1}] \models \varphi(\alpha,y)$.  By the distributivity of $\mathbb P(\alpha+1,\delta)$, $L_{\theta_\alpha}[G_{\kappa,\alpha+1}] = H_{\theta_\alpha}^{L[G]}$.  Thus $G_\alpha$ is definable in $L[G]$ as the unique $y \in H_{\theta_\alpha}$ such that $H_{\theta_\alpha} \models \varphi(\alpha,y)$.  By the uniformity of these definitions, $G$ is definable in $L[G]$ in terms of the parameter $\delta$.  But because $L[G]$ has no generics over $L$ for nontrival forcings of distributivity greater than $\delta$, $G$ is moreover definable without parameters in $L[G]$.
\end{proof}

The desired result can be also be obtained by a slightly different method.  To force ``$V=\HOD$'' over an arbitrary model of $\ZFC$, we typically code information into properties of ordinals via an iteration rather than a product.  This can also be subsumed under the present scheme.  For example, suppose $\mathcal C$ is the class of inaccessibles, and we recursively code every set into the GCH pattern via iteration, with the iteration only active at successor cardinals.  Then a standard termspace argument (see \cite{MR2768691}) shows that, if the length of the iteration is $\delta$, then this iteration is a projection of the Easton-support product of $\col(\alpha^+,\alpha^{+\mathcal C})$, over $\alpha \in \mathcal C \cap\delta$.  Let us call the above iteration $\mathbb A$ and the above Easton-support product $\mathbb B$. Starting from $L$ or a similar ground model, we can ensure that $\mathbb A$ is definable from $\delta$, and we can take a projection $\pi : \mathbb B \to \mathbb A$ that is likewise definable.

Now we have already argued that, if $\delta = \kappa_\omega$, then we have a system of generic extensions $\la L[G_r] : r \in \mathbb R \ra$, pairwise $\subseteq$-incomparable and mutually embeddable, where each $G_r$ is $\mathbb B$-generic over $L$.  If $j : L[G_r] \to L[G_s]$ is a witnessing embedding, then because $\pi$ is defined in $L$ from the parameter $\delta$ which is fixed by $j$, $j(\pi) = \pi$.  For each $r \in \mathbb R$, let $H_r = \pi[G_r]$.  Then for every $r,s \in \mathbb R$ and every $a \in H_r$, if $b \in G_r$ is such that $\pi(b) = a$, then $j(a) = j(\pi(b)) = j(\pi)(j(b))= \pi(j(b)) \in \pi[G_s] = H_s$.  Thus the embeddings restrict to the submodels $L[H_r]$, which satisfy ``$V=\HOD$''.  In summary, we have:

\begin{theorem}\label{cor:hod-2-cycle}
The following are equivalent to the existence of $0^\sharp$:
\begin{enumerate}
\item There are two definable $\subseteq$-incomparable proper class models $M,N$, and definable elementary embeddings $j : M \to N$, $k : N \to M$, such that $j \restriction \ord = k \restriction \ord$, and each model satisfies ``$V = \HOD$''.
\item There is a definable system of $\subseteq$-incomparable mutually embeddable proper class models $\la M_r : r \in \mathbb R \ra$, with each model satisfying  ``$V = \HOD$''.
\end{enumerate}
\end{theorem}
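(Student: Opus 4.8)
The plan is to read this theorem off from the discussion immediately preceding it, treating it as the synthesis of two ingredients: the $0^\sharp$-constructions of this section (supplying the \emph{forward} directions) and Kunen's theorem together with the $V=\HOD$ hypothesis (supplying the \emph{reverse} directions). I would organize the proof around the four implications $0^\sharp\Rightarrow(1)$, $0^\sharp\Rightarrow(2)$, and their converses, noting that $(1)\Rightarrow 0^\sharp$ and $(2)\Rightarrow 0^\sharp$ follow from one common argument. I would not try to deduce $(1)$ from $(2)$, since a witnessing pair drawn from the $\mathbb{R}$-indexed system need not have embeddings agreeing on $\Ord$; clauses $(1)$ and $(2)$ are obtained from two different constructions above.

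For the reverse direction, suppose we are given models and embeddings as in $(1)$ or $(2)$, and pick any witnessing $j\colon M\to N$ between two of the (necessarily distinct, being $\subseteq$-incomparable) models. Since $M,N$ are proper class models of $\ZFC$ containing all ordinals, $L^M=L=L^N$ and $j\rest L\colon L\to L$ is elementary; by Kunen's theorem it suffices to see $j\rest L\neq\id$, equivalently $j\rest\Ord\neq\id$. Suppose toward a contradiction that $j\rest\Ord=\id$. Working in $M\sats V=\HOD$, each $x$ is coded by a set of ordinals $X\in M$: using choice in $M$, fix a bijection between $\trcl(\{x\})$ and an ordinal and let $X$ code the pulled-back membership relation. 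The decoding of $X$—forming the transitive collapse of the coded well-founded extensional relation—is absolute between transitive models of $\ZF$, so it returns $x$ in both $M$ and $N$. Since $X\sub\Ord$ and $j\rest\Ord=\id$, the argument of Proposition \ref{prop:3_models} gives $j(X)=X$, whence $j(x)=x$ by elementarity applied to the decoding formula. Thus $j=\id_M$ and $M=\ran(j)\sub N$, contradicting incomparability. Hence $j\rest\Ord\neq\id$ and $0^\sharp$ exists.

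For the forward direction I would instantiate the $\HOD$-coding scheme developed just above: take $\mathcal C$ to be the $L$-inaccessibles, let $\mathbb A$ be the GCH-coding iteration of length $\delta=\kappa_\om$, let $\mathbb B$ be the associated Easton-support product of the $\col(\alpha^+,\alpha^{+\mathcal C})$, and let $\pi\colon\mathbb B\to\mathbb A$ be the $L$-definable projection supplied by the termspace argument. For clause $(2)$, take the system of continuum-many mutually embeddable, $\subseteq$-incomparable $\mathbb B$-generic extensions $\langle L[G_r]:r\in\mathbb R\rangle$ built above, definable from $0^\sharp$, and set $M_r=L[\pi[G_r]]$. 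The generic $\pi[G_r]$ is ordinal-definable in $M_r$ (it is coded into the GCH pattern, as in the Claim above), so every element of $M_r$ is ordinal-definable there and $M_r\sats V=\HOD$. Since each witnessing $j\colon L[G_r]\to L[G_s]$ fixes $\delta=\kappa_\om$ and hence the $L$-definable $\pi$, the computation $j(\pi(b))=\pi(j(b))\in\pi[G_s]$ shows $j[\pi[G_r]]\sub\pi[G_s]$, so $j$ restricts to an elementary embedding $M_r\to M_s$. For clause $(1)$, I would instead start from the two-model construction of part (\ref{item:mutual_pair}), where \emph{both} lifted embeddings extend the single minimal $j\colon L\to L$; projecting as above yields $M=L[\pi[G]]$ and $N=L[\pi[H]]$ satisfying $V=\HOD$, with the restricted embeddings agreeing on $\Ord$ automatically, their common restriction to $\Ord$ being $j\rest\Ord$. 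Definability without parameters follows as in the remainder of the section, using that $0^\sharp$, $\delta$, $\mathbb A$, $\mathbb B$ and $\pi$ are all definable.

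The step I expect to require the most care is verifying that $\subseteq$-incomparability survives the projection $\pi$, i.e.\ that the $M_r=L[\pi[G_r]]$ (and likewise $M,N$) remain pairwise incomparable even though $\pi[G_r]$ carries strictly less information than $G_r$. Incomparability of the $L[G_r]$ comes from mutual genericity of $G_\alpha^{f(\alpha)},G_\alpha^{g(\alpha)}$ at the least even ordinal $\alpha$ where $f,g$ disagree, and I must check this distinguishing data is not destroyed by $\pi$. The natural route is: since $M_r\sats V=\HOD$ with $\pi[G_r]$ ordinal-definable, $M_r\sub M_s$ would force $\pi[G_r]\in M_s=L[\pi[G_s]]$, so it suffices to show that the $\mathbb A$-generic $\pi[G_r]$—which records, via the GCH pattern coded at stage $\alpha$, genuinely generic information arising from $G_\alpha^{f(\alpha)}$—cannot lie in $L[\pi[G_s]]$, because the corresponding $\mathbb A$-generics at coordinate $\alpha$ are mutually generic over $L$. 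Everything else (preservation of $\ZFC$, and the fact that $\kappa_\om$ is countable in $V$ so that $\mathbb B$ is set forcing and each $L[G_r]$ is a genuine proper class model) is routine bookkeeping.
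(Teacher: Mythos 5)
Your proposal is correct, and for the converse it coincides with the paper's intended argument: any witnessing embedding between $\subseteq$-incomparable models must move an ordinal (your coding-by-sets-of-ordinals plus collapse-absoluteness argument makes this explicit), and then Kunen applied to $j\rest L$ gives $0^\sharp$; this is exactly implication (2)$\Rightarrow$(1) of Theorem \ref{main0sharp}. For the forward direction, however, you commit to the second of the two mechanisms the paper offers --- projecting the collapse product $\mathbb{B}$ onto the coding iteration $\mathbb{A}$ via the termspace projection $\pi$ --- whereas the paper's primary mechanism is the first one: choose the factors $\mathbb{Q}(\alpha)$ to satisfy the self-coding property $(*)$ directly, so that the resulting Easton product is itself good and, by the Claim, every generic extension of $L$ by it already satisfies ``$V=\HOD$''. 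On that route, clauses (1) and (2) of the theorem are \emph{literally} parts (\ref{item:mutual_pair}) and (\ref{item:mutual_family}) of Theorem \ref{main0sharp} (the latter with $\kappa=2^\om$), and $\subseteq$-incomparability is inherited verbatim from the mutual genericity built into those constructions; no projection and no further argument are needed. Your route instead owes exactly the verification you flag: that incomparability survives $\pi$. Your sketch is sound and can be completed (projections carry mutually generic filters to filters generating mutually generic filters; the termspace projection respects the blocks, and the models agree below the first disagreement block; the iterands are nontrivial; and the forced closure of the iteration's tail together with the chain condition of its initial segments prevents $L[\pi[G_s]]$ from absorbing the distinguishing block of $\pi[G_r]$), but this is genuine extra work --- and it is work the paper itself silently omits when presenting the projection method, since there the paper only proves incomparability of the models $L[G_r]$, not of their projections. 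In short: same theorem, same skeleton, and a careful converse, but you picked the forward route that requires (and you supply only in outline) an additional mutual-genericity-under-projection lemma which the paper's first mechanism renders unnecessary.
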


\section{Constructions using iterated ultrapowers}
\label{sec:measurables}

In this section, we show that if there are measurably-many measurables, then we can construct large systems of mutually embeddable distinct models, all of which are certain iterated ultrapowers of $V$.

\begin{theorem}
Let $\kappa$ be a measurable cardinal, and let $\langle \mu_i : i < \kappa\rangle$ be a sequence of measurable cardinals above $\kappa$. 
Then for every infinite cardinal $\nu$, there is a collection of $\nu$-many distinct mutually embeddable well-founded models, each having the same theory as $V$.
\end{theorem}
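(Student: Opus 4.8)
The plan is to realize each of the $\nu$ models as a \emph{linear iterated ultrapower} of $V$ driven by the sequence $\langle U_i : i<\kappa\rangle$ of normal measures, where $U_i$ concentrates on $\mu_i$, together with a normal measure $W$ on $\kappa$. First I would fix, for each member of an index family of size $\nu$, a \emph{schedule}: a strictly increasing sequence of indices below $\kappa$ telling us, step by step, the (image of the) measure to apply. Because each $U_i$ is $\mu_i$-complete, every such iteration is by countably complete ultrafilters, so by the classical Gaifman argument all the iterates are well-founded; and since the iteration map $j\colon V\to M$ is elementary, sentences are preserved and each $M$ has the same theory as $V$. Thus the three demands ``well-founded'', ``same theory as $V$'', and ``there are $\nu$ of them'' reduce to choosing $\nu$-many schedules and proving that the resulting models are pairwise distinct and pairwise mutually embeddable.

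For distinctness I would read off, inside each $M$, the \emph{critical sequence} of the iteration, namely the increasing sequence of successor-step critical points, which are exactly the images of the relevant $\mu_i$; two schedules that differ produce iterates whose critical sequences, hence whose inner-model structure at the first point of disagreement, differ, so $M\neq M'$. One direction of mutual embeddability is then essentially free: if the schedule of $M'$ is obtained from that of $M$ by \emph{deleting an initial block} of indices, then applying first the deleted ultrapowers gives an elementary $e\colon V\to V^{*}$, and by elementarity $e$ lifts the $M'$-iteration of $V$ to the $M$-iteration of $V^{*}$, exhibiting $M'$ as the source of an elementary embedding into $M$ --- a ``tail embedding''. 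The same move, applied coordinatewise, embeds any iterate whose schedule end-extends another.

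\textbf{The main obstacle} is producing embeddings in the \emph{reverse} direction, since a well-ordered schedule never nontrivially self-embeds downward and Kunen's theorem forbids an outright index-shifting $j\colon V\to V$. This is exactly where the hypothesis of \emph{measurably-many} measurables is used. Using the normal measure $W$ on $\kappa$, I would take $\ell = j_W\colon V\to N=\ult(V,W)$ into an inner model $N$ carrying a displaced copy $\ell(\langle U_i\rangle)$ of the measure sequence; composing $\ell$ with a tail embedding computed inside $N$ lets us realize a ``longer-looking'' schedule as the image, under an elementary map, of a ``shorter'' one. Concretely, I would choose the index family so that any two of its schedules are related, in both directions, by a finite composite of (i) tail deletions and (ii) $W$-driven displacements, so that chasing the two kinds of elementary maps yields $M\hookrightarrow M'$ and $M'\hookrightarrow M$ simultaneously. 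Organizing the $\nu$ schedules as the branches of a suitable tree --- in the spirit of the large-family construction of \S\ref{sec:0^sharp}, but with the role of the Silver indiscernibles played by the iteration points of $\langle U_i\rangle$ and the role of the shifting embeddings played by $\ell$ --- should yield all the mutual embeddings uniformly. Verifying that the $W$-displacement genuinely lines the two configurations of measures up, so that the composite of the transport and tail maps is defined and elementary, is the step I expect to require the most care.
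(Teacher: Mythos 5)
Your reductions of ``well-founded'' and ``same theory as $V$'' to countable completeness and elementarity are fine, and your one-direction ``tail embedding'' observation is essentially correct; but both of the substantive claims have genuine gaps. First, distinctness: you propose to ``read off, inside each $M$, the critical sequence of the iteration.'' The critical sequence of an iteration is in general \emph{not} an element of, nor recoverable inside, the direct limit; already for the simplest case (one normal measure iterated $\omega$ times) the critical sequence is Prikry-generic over $M_\omega$ --- a fact of exactly the kind the paper exploits in \S\ref{sec:2-cycles} --- so it is certainly not available in the model. The individual critical points are of course ordinals in $M$, but nothing in your setup lets $M$, or us comparing $M$ with $M'$, detect which ordinals were critical points. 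The paper secures distinctness by a device absent from your proposal: the measures $\mathcal V_i$ on the $\mu_i$ are chosen of \emph{Mitchell order zero}, so that at the first ordinal of disagreement the relevant cardinal $\delta$ is measurable in one of the two limit models and non-measurable in the other, and $\delta$ is fixed by all subsequent maps. Without some such first-order invariant your distinctness argument does not go through.

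Second, and more seriously, the reverse-direction embeddings --- which you yourself flag as the main obstacle --- are never constructed, and the sketch given does not work as stated: $j_W\restriction M_A$ maps $M_A$ elementarily into $j_W(M_A)$, which is an iterate of $N=\ult(V,W)$ by $N$-measures (note $j_W(\vec U)$ consists of $N$-ultrafilters, not the original $U_i$, since $\p(\mu_i)\not\subseteq N$), and no argument is given that any composite of such maps with tail embeddings lands in $M_B$. The paper avoids this problem by a structurally different design rather than by composing iteration maps: \emph{every} iteration applies the same normal measure $\mathcal U$ on (the image of) $\kappa$ at every step, and the binary digit $f(\beta)$ only decides whether to apply one additional ultrapower by a measure sitting above. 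Consequently every element of $M_f$ has the form $j^f_{0,\nu}(h)\bigl(\kappa^f_{\alpha_0},\delta^f_{\alpha_0},\dots,\kappa^f_{\alpha_n},\delta^f_{\alpha_n}\bigr)$ with $h\in V$, and the embedding $M_f\to M_g$ is defined \emph{directly on these generators}, sending them to $j^g_{0,\nu}(h)$ evaluated at the critical ordinals in positions $s(\alpha_i)$, where $s$ is increasing with $f=g\circ s$; elementarity is verified by inserting the skipped blocks of $g$ one at a time. Since any $f$ is such a ``digit subsequence'' of any $g$ that takes both values $0,1$ unboundedly often, this single mechanism yields embeddings in \emph{both} directions simultaneously --- a symmetry your schedule design lacks and which, as you note, Kunen's theorem prevents you from faking with a shift. (You would also need the paper's preliminary step for large $\nu$: first iterate a measure on $\kappa$ until its image is at least $\nu$, since there are not $\nu$-many schedules below $\kappa$ once $\nu>2^\kappa$.)
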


\begin{proof}
We give a construction of a system as claimed of size $2^\nu$ for $\nu\leq\kappa$.  In order to show the general claim, let $\nu$ be arbitrary, and let $j : V \to M$ be an iterated ultrapower embedding by a measure on $\kappa$, with the iteration long enough such that $j(\kappa) \geq \nu$.  Then apply the construction in $M$.

Let $\mathcal U$ be a normal measure on $\kappa$, and let $\vec{\mathcal V} = \la\mathcal V_i : i < \kappa \ra$ be such that for each $i<\kappa$, $\mathcal V_i$ is a normal measure on $\mu_i$ of Mitchell-order zero.  For each $f \in 2^\nu$, we define an iteration of ultrapowers,
$\la M_i^f : i < \nu \ra$ and let $M_f$ be the direct limit.

Let us describe the construction of the model associated to a given binary sequence $f$ of any length, while suppressing the superscript.  Let $M_0 = V$.  For $\alpha > 0$, assume we have a commuting system of embeddings $\la j_{\gamma,\beta} : M_\gamma \to M_\beta \mid 0 \leq \gamma \leq \beta < \alpha \ra$.  If $\alpha$ is a limit ordinal, let $M_\alpha$ be the direct limit.  
If $\alpha = \beta+1$, let $i_\alpha : M_\beta \to M^*_{\alpha}$ be the ultrapower map by $j_{0,\beta}(\mathcal U)$.
If $f(\beta) =0$, let $M_{\alpha} = M^*_{\alpha}$.  
Otherwise, let $M_{\alpha} = \ult(M^*_{\alpha},i_\alpha \circ j_{0,\beta}(\vec{\mathcal V})(j_{0,\beta}(\kappa)))$.  
Extend the system of maps accordingly.

Let $f,g$ be distinct binary sequences of the same length $\lambda \leq \kappa$, and let $\alpha$ be the first ordinal for which $f(\alpha) \not= g(\alpha)$; say $f(\alpha) = 0$.  Then $M^f_\beta = M^g_\beta$ for $\beta \leq \alpha$, and the systems of maps are likewise the same up to this point.
Let $\delta = i_\alpha \circ j_{0,\alpha}(\la \mu_i : i < \kappa \ra)(j_{0,\alpha}(\kappa))$, and let $\mathcal V^* = j_{0,\alpha+1}(\vec{\mathcal V})(\delta)$.  
Then $\delta$ is measurable in $M^*_\alpha = M^f_{\alpha+1}$, but because $\mathcal V^*$ has Mitchell-order zero, $\delta$ is not measurable in $M^g_{\alpha+1}$.  Since $f,g\in M^*_\alpha$, $\delta$ is inaccessible in both models, we only apply measures strictly below or above $\delta$ in the further sequence of embeddings according to either $f$ or $g$, and the length of the iterations is $\lambda < \delta$, we have that $\delta = j^f_{\alpha+1,\lambda}(\delta) = j^g_{\alpha+1,\lambda}(\delta)$.  Thus $M_f \not= M_g$.

Let us first prove the claim for $\nu = \omega$.
We claim that if $f,g \in 2^\omega$ and $g$ takes both values 0 and 1 infinitely often, then $M_f$ is embeddable into $M_g$.  Let us fix some additional notation.  Let $\mu = \sup_{i<\kappa} \mu_i$.  For a given $f \in 2^\omega$, define:
$$\kappa^f_n = j^f_{0,n}(\kappa); \,\, \delta^f_n =  j^f_{0,n+1}(\la \mu_i : i < \kappa \ra)(\kappa^f_n).$$
Every element of $M^f_{n+1}$ is of the form $j^f_{0,n+1}(h)(\kappa^f_0,\delta^f_0,\dots,\kappa^f_n,\delta^f_n)$, where $h$ is a function with domain $(\kappa \times \mu)^{n+1}$.  (Note that these ordinals $\kappa_i^f,\delta_i^f$ are fixed by later embeddings.)

Let $s$ be an increasing function on  $\omega \cup \{ -1\}$ such that $s(-1) = -1$ and for every $n \geq 0$, $f(n) = g(s(n))$.  For $n \geq 0$, if $p_n$ is the finite sequence 
$$\la g(s(n-1)+1),\dots,g(s(n)-1) \ra,$$
 then we can write $g$ as:
$$p_0 \,^\frown \la f(0) \ra ^\frown p_1 \,^\frown \la f(1) \ra ^\frown  \dots ^\frown  p_n \,^\frown \la f(n) \ra ^\frown\dots$$

Suppose $M_f \models \varphi(x)$.  Then for some $n$ and some function $h$ on $(\kappa \times \mu)^{n+1}$, $x = j^f_{0,\omega}(h)(\kappa^f_0,\delta^f_0,\dots,\kappa^f_n,\delta^f_n)$.  Furthermore, this is equivalent to 
$$M^f_{n+1} = M_{f \restriction (n+1)} \models \varphi\left(j^f_{0,n+1}(h)(\kappa^f_0,\delta^f_0,\dots,\kappa^f_n,\delta^f_n)\right).$$
Let $g_0 = p_0 \,^\frown f \restriction (n+1)$, and let $m_0 = \len(p_0)$.  By taking the sequence of ultrapowers of $V$ determined by $p_0$ and applying elementarity, we have:
$$M_{g_0} \models \varphi \left(j^{g_0}_{0,m_0+n+1}(h)(\kappa^{g_0}_{m_0},\delta^{g_0}_{m_0},\kappa^{g_0}_{m_0+1},\delta^{g_0}_{m_0+1},\dots,\kappa^{g_0}_{m_0+n},\delta^{g_0}_{m_0+n}) \right).$$
This is expressible in $M_{p_0 \,^\frown \la f(0) \ra}$ in terms what is satisfied in the iterated ultrapower following $\la f(1),\dots,f(n) \ra$.   So we can take the series of ultrapowers following $p_1$ over $M_{p_0 \,^\frown \la f(0) \ra}$ to insert $p_1$ in the appropriate place.  Let $m_1 = \len(p_1)$, and let $g_1 = p_0 \,^\frown \la f(0) \ra ^\frown p_1 \,^\frown \la f(1),\dots,f(n) \ra$.  By elementarity, we get:
\begin{align*}
M_{g_1} \models \varphi (j^{g_1}_{0,m_0+m_1+n+1}(h)	& (\kappa^{g_1}_{m_0},\delta^{g_1}_{m_0}, \kappa^{g_1}_{m_0+m_1+1},\delta^{g_1}_{m_0+m_1+1},\dots \\
											&\dots,\kappa^{g_1}_{m_0+m_1+n},\delta^{g_1}_{m_0+m_1+n}) ).
\end{align*}
We continue in this way with a sequence of functions $g_0,g_1,\dots,g_n$, where
$g_n = p_0 \,^\frown \la f(0) \ra ^\frown p_1 \,^\frown \la f(1) \ra ^\frown  \dots ^\frown  p_n \,^\frown \la f(n) \ra.$
We have:
\begin{align*}
M_{g_n} \models \varphi (	& j^{g_n}_{0,\sum_{i=0}^n m_i+n+1}(h)	(\kappa^{g_n}_{m_0},\delta^{g_n}_{m_0}, \kappa^{g_n}_{m_0+m_1+1},\delta^{g_n}_{m_0+m_1+1}, \\
					&\kappa^{g_n}_{m_0+m_1+m_2+2},\delta^{g_n}_{m_0+m_1+m_2+2},\dots,\kappa^{g_n}_{\sum_{i=0}^n m_i+n},\delta^{g_n}_{\sum_{i=0}^n m_i+n}) ),
\end{align*}
where $m_i = \len(p_i)$.  This is equivalent to:
$$M_{g} \models \varphi \left(j^{g}_{0,\omega}(h) (\kappa^{g}_{s(0)},\delta^{g}_{s(0)}, \dots, \kappa^{g}_{s(n)},\delta^{g}_{s(n)}) \right).$$
Therefore, we may define an elementary embedding $k_{f,g} : M_f \to M_g$ by:
 $$j^f_{0,\omega}(h)(\kappa^f_0,\delta^f_0,\dots,\kappa^f_n,\delta^f_n) \mapsto j^g_{0,\omega}(h)(\kappa^g_{s(0)},\delta^g_{s(0)},\dots,\kappa^g_{s(n)},\delta^g_{s(n)}).$$

For the general case, we can similarly show that if $f,g \in 2^\nu$ are such that $g$ takes both values 0 and 1 unboundedly often, then $M_f$ is embeddable into $M_g$.  This uses the fact that in longer iterations, objects in direct limits are still found by applying the image of a function from $V$ to a finite sequence of critical ordinals.  For suppose $M_f \models \varphi(x)$ and $x = j^f_{0,\nu}(h)(\kappa^f_{\alpha_0},\delta^f_{\alpha_0},\dots,\kappa^f_{\alpha_n},\delta^f_{\alpha_n})$.  Write $f$ as
$$q_0 \,^\frown \la f(\alpha_0) \ra ^\frown q_1 \,^\frown \la f(\alpha_1) \ra ^\frown  \dots ^\frown  q_n \,^\frown \la f(\alpha_n) \ra ^\frown\dots$$
If $t = \la f(\alpha_0),\dots,f(\alpha_n) \ra$, then similar arguments as above show that the $q_i$'s can be inserted over $M_t$ to show that
$$M_{t} \models \varphi \left( j_{0,n+1}^{t}(h)(\kappa^{t}_0,\delta^t_0,\dots,\kappa^t_n,\delta^t_n) \right).$$
If $s : \nu \to \nu$ is an increasing function such that $f = g \circ s$, then we can insert finitely many subsequences of $g$ over $M_t$ to get:
$$M_{g} \models \varphi \left(j^{g}_{0,\nu}(h) (\kappa^{g}_{s(\alpha_0)},\delta^{g}_{s(\alpha_0)}, \dots, \kappa^{g}_{s(\alpha_n)},\delta^{g}_{s(\alpha_n)}) \right).$$
Thus, the map defined like before is elementary.
\end{proof}

\section{Strong 2-cycles}
\label{sec:2-cycles}
\begin{definition}[\cite{gaifman},\cite{strong_axioms}]
We say that $\mathrm{I3}(\lambda)$ holds if there is a non-trivial elementary embedding $j \colon V_\lambda \to V_\lambda$, where $\lambda$ is a limit ordinal.    
\end{definition}
The axiom $\exists \lambda \mathrm{I3}(\lambda)$ is a very strong large 
cardinal axiom; see \cite{the_higher_infinite} for more details.

Suppose $\mathrm{I3}(\lambda)$ holds and fix a nontrivial elementary embedding $j : V_\lambda \to V_\lambda$.  
Let $\kappa$ be the critical point of $j$, and let $\mathcal U$ be the normal measure on $\kappa$ derived from $j$.  That is, $X \in \mathcal U$ iff $X \subseteq \kappa$ and $\kappa \in j(X)$. Let $M = \ult(V_\lambda,\mathcal U)$, and let $i : V_\lambda \to M$ be the ultrapower embedding.  Of course, $M\not=V_\lambda$.  $M$ can be embedded back into $V_\lambda$ by a standard application of \L o\'s' Theorem: Let $k : M \to V_\lambda$ be defined by $ k([f]_{\mathcal U}) = j(f)(\kappa)$. Moreover, even though $\lambda$ is a strong limit singular cardinal of countable cofinality, $V_\lambda \models \ZFC$.  We call a pair of mutually embeddable distinct models a 2-cycle.
Thus the axiom I3 provides a relatively straightforward example of a 2-cycle. 

Furthermore, this argument easily produces many different transitive submodels of $V_\lambda$ that are mutually embeddable.  Working with a fixed $j$, we can derive various kinds of ultrapowers that embed back into $V_\lambda$ for the same reason as above, and are thus mutually embeddable by compositions of embeddings.  We can also take advantage of the fact that there are many self-embeddings of $V_\lambda$ with various critical points, since for any two elementary $j,k : V_\lambda \to V_\lambda$, the function $\bigcup_{\alpha<\lambda} j(k\restriction V_\alpha)$ is an elementary embedding.

However, we do not need assumptions as strong as I3 in order to have such 2-cycles.  
Suppose $E$ is an extender.  Let $M_0 = V$ and inductively define the embeddings $j_{n,n+1} : M_n \to M_{n+1} = \ult(M_n,j_{0,n}(E))$ and $j_{m,n+1} =j_{n,n+1} \circ j_{m,n}$ for $m < n$.  By standard arguments, the direct limit $M_\omega$ is well-founded thus can be identified with its transitive collapse.  Applying $j_{0,1}$ to the directed system defining $M_\omega$ yields the same system, only with the first model removed.  Thus $j_{0,1} \restriction M_\omega$ is a self-embedding of $M_\omega$.  
In order to obtain a non-trivial 2-cycle, we would like to find a transitive model $N$ such that the elementary embedding $j_{0,1} \restriction M_{\omega}$ splits through $N$. 

Let us assume that $E$ is an extender such that its derived ultrafilter $\mathcal U$ over its critical point $\kappa$ is a member of $\ult(V,E)$. Then $\mathcal U \in M_\omega$, and inside $M_\omega$ we can compute $N = \ult(M_\omega,\mathcal U) \subsetneq M_\omega$.  For functions $f_1,\dots,f_n : \kappa \to M_\omega$ that live in $M_\omega$ and a formula $\varphi(x_1,\dots,x_n)$, we have 
$$\{ \alpha : M_\omega \models \varphi(f_1(\alpha),\dots,f_n(\alpha)) \} \in \mathcal U \Longleftrightarrow M_\omega \models  \varphi(j_{0,1}(f_1)(\kappa),\dots,j_{0,1}(f_n)(\kappa)).$$
Thus $N$ can be embedded back into $M_\omega$ as before.
An extender as above exists if there is cardinal $\kappa$ that is 
+2-strong.  But by restricting a +2-strong extender just to the length 
needed to capture its derived ultrafilter, we see that the consistency strength 
of having an extender with this property is less than +2-strongness.  But it is 
still significant.  Recall that if $\kappa$ is measurable then 
$o(\kappa)\leq(2^\kappa)^+$:

\begin{theorem}
Suppose $M$ is a transitive model of ZFC, $j \colon M \to M$ is an elementary embedding with critical point $\kappa$, $\mathcal U$ is 
the normal ultrafilter on $\kappa$ derived from $j$ with seed $\kappa$, and 
$\mathcal U \in M$.  Then:
\begin{enumerate}
 \item\label{item:M_models_o(kappa)_large}  $M\models o(\kappa) =
(2^\kappa)^+$,
 \item\label{item:M_models_X_in_ult} $M\models\text{for every 
}X\subseteq\p(\kappa)$ there is a normal measure $\mu$  on $\kappa$ such that 
$X\in\ult(V,\mu)$,
 \item\label{item:reflect_measure_one} for $U$-measure one many $\alpha<\kappa$,
 $M\models$ the two statements above regarding $\alpha$ instead of $\kappa$, 
and in fact,
 \item\label{item:reflect_varphi_measure_one} for every formula $\varphi$ 
and every $A\in\p(\kappa)\cap M$, 
\[ M\models\varphi(A)\iff[M\models\varphi(A\cap\alpha)\text{ for 
}U\text{-measure one many }\alpha].\]
\end{enumerate}
\end{theorem}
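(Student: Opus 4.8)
The plan is to read everything off the internal ultrapower $N=\ult(M,\mathcal U)$, which makes sense and is well-founded inside $M$ because $\mathcal U\in M$ is $\kappa$-complete. Let $i\colon M\to N$ be the ultrapower map and let $k\colon N\to M$ be the factor embedding $k([f]_{\mathcal U})=j(f)(\kappa)$, so that $j=k\circ i$. As in the I3 discussion above, $k$ fixes $\kappa$ and every smaller ordinal, so $\crit(k)>\kappa$; and since $\mathcal U$ is $\kappa$-complete, $N$ is closed under $\kappa$-sequences, giving $\mathcal P(\kappa)^N=\mathcal P(\kappa)^M$ with $k\restriction\mathcal P(\kappa)^N=\id$ (indeed $k\restriction V_\kappa^M=\id$).

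I would establish clause (\ref{item:reflect_varphi_measure_one}) first, since it drives the others. For $A\in\mathcal P(\kappa)\cap M$ one has $[\alpha\mapsto A\cap\alpha]_{\mathcal U}=A$, so \L o\'s' theorem yields $N\models\varphi(A)$ iff $\{\alpha<\kappa:M\models\varphi(A\cap\alpha)\}\in\mathcal U$, while $k(A)=A$ and elementarity of $k$ give $N\models\varphi(A)$ iff $M\models\varphi(A)$; chaining the two equivalences is exactly (\ref{item:reflect_varphi_measure_one}). The identical computation, using $i(p)=p=k(p)$ for $p\in V_\kappa^M$, shows the reflection survives any parameter $p\in V_\kappa^M$. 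Clause (\ref{item:reflect_measure_one}) is then immediate from (\ref{item:reflect_varphi_measure_one}) once (\ref{item:M_models_o(kappa)_large}) and (\ref{item:M_models_X_in_ult}) are known: apply it with $A=\kappa$ and with $\varphi$ the formula expressing that (\ref{item:M_models_o(kappa)_large}) and (\ref{item:M_models_X_in_ult}) hold of its ordinal argument; as $A\cap\alpha=\alpha$, the truth of (\ref{item:M_models_o(kappa)_large}),(\ref{item:M_models_X_in_ult}) at $\kappa$ reflects to a $\mathcal U$-measure-one set of $\alpha$.

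It remains to prove (\ref{item:M_models_o(kappa)_large}) and (\ref{item:M_models_X_in_ult}). I would derive (\ref{item:M_models_o(kappa)_large}) from (\ref{item:M_models_X_in_ult}) by a recursion inside $M$: build a $\lhd$-increasing chain $\langle\mathcal W_\xi:\xi<(2^\kappa)^+\rangle$ of normal measures on $\kappa$, where at each stage $\eta$ the chain so far has size at most $2^\kappa$ and hence is coded by a single $X_\eta\subseteq\mathcal P(\kappa)$; choosing by (\ref{item:M_models_X_in_ult}) a normal $\mathcal W_\eta$ with $X_\eta\in\ult(M,\mathcal W_\eta)$ places every earlier $\mathcal W_\xi$ in $\ult(M,\mathcal W_\eta)$, i.e.\ $\mathcal W_\xi\lhd\mathcal W_\eta$. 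Together with the standing bound $o(\kappa)\le(2^\kappa)^+$ this yields (\ref{item:M_models_o(kappa)_large}).

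The heart of the matter, and the step I expect to be hardest, is (\ref{item:M_models_X_in_ult}): given an arbitrary $X\subseteq\mathcal P(\kappa)$ in $M$, find a \emph{single} normal measure capturing it. No derived ultrapower can work uniformly, since the hypothesis is weaker than $(\kappa+2)$-strength and no fixed normal ultrapower contains all of $\mathcal P(\mathcal P(\kappa))$. My approach would combine two ingredients. First, the parametrized reflection from the first step already shows $\mathcal U$ captures every $X$ definable over $M$ from a parameter in $V_\kappa^M$ (for such $X$, $X=i(X)\cap\mathcal P(\kappa)^N\in N$). Second, the factor map forces $\mathcal U$ to sit high in the Mitchell order: from the \L o\'s' computation $o^N(\kappa)=[\alpha\mapsto o^M(\alpha)]_{\mathcal U}$ and the elementarity relation $k(o^N(\kappa))=o^M(\kappa)$, the inequality $o^M(\kappa)\ge o^N(\kappa)+1$ is incompatible with $o^N(\kappa)<\crit(k)$, so $o^N(\kappa)\ge\crit(k)>\kappa$. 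The task is to leverage this largeness, moving along the Mitchell hierarchy to absorb parameters above $\kappa$, so that \emph{every} $X$, and not merely the low-parameter ones, lands in some ultrapower. Making this absorption argument precise, and pushing $o(\kappa)$ from merely large to exactly $(2^\kappa)^+$, is the crux.
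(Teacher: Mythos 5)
Your handling of clauses (4) and (3) is correct and coincides with the paper's own argument: normality gives $A=[\alpha\mapsto A\cap\alpha]_{\mathcal U}$, \L o\'s\ gives the measure-one equivalence with $N\models\varphi(A)$, and $k(A)=A$ transfers this to $M$; then (3) follows by applying (4) to a formula expressing (1) and (2). Your derivation of (1) from (2) --- coding an initial segment of a Mitchell-increasing chain of length $<(2^\kappa)^+$ as a single $X\subseteq\mathcal{P}(\kappa)$ and using (2) to find a measure whose ultrapower absorbs it --- is also sound, and is genuinely different from the paper, which proves (1) directly. But the proposal has a real gap exactly where you place ``the crux'': clause (2) is never proved. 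The plan you sketch for it (leveraging $o^N(\kappa)\geq\crit(k)$ and ``moving along the Mitchell hierarchy to absorb parameters'') is not carried out, and it is aimed at the wrong target: you are trying to exhibit, for each of the $2^{2^\kappa}$ many $X\in\mathcal{P}(\mathcal{P}(\kappa))^M$, a normal measure in $M$ capturing it, which is precisely the thing that cannot be done by pointing at any ultrapower you have constructed. Since (1) and (3) in your scheme rest on (2), the main content of the theorem is missing.

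The missing idea is to push your fixed-point analysis of $k$ exactly one level higher. You noted $k\restriction\mathcal{P}(\kappa)^N=\id$; but since also $\mathcal{P}(\kappa)^M=\mathcal{P}(\kappa)^N$, it follows that $k(X)=X$ for \emph{every} $X\in\mathcal{P}(\mathcal{P}(\kappa))^N$: by elementarity $k(X)\subseteq\mathcal{P}(\kappa)^M=\mathcal{P}(\kappa)^N$, and for $A\in\mathcal{P}(\kappa)^N$ we have $A\in k(X)\iff k(A)\in k(X)\iff A\in X$. With this, (2) is immediate and needs no search for capturing measures: since $k(\kappa)=\kappa$ and $k\colon N\to M$ is elementary, it suffices to prove the universally quantified statement inside $N$; and for $X\in N$ with $X\subseteq\mathcal{P}(\kappa)$, elementarity of $k$ gives that $N\models$ ``some normal $\mu$ on $\kappa$ has $X\in\ult(V,\mu)$'' iff $M$ satisfies the same statement about $k(X)=X$ --- which $M$ does, witnessed by $\mu=\mathcal U$, for the trivial reason that $X\in N=\ult(M,\mathcal U)$. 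The quantifier over $X$ is thus handled by the elementarity of $k$, not by absorption along the Mitchell order. The same upgrade finishes (1) directly along the lines of your ``second ingredient'': coding ordinals below $((2^\kappa)^+)^N$ by wellorders lying in $\mathcal{P}(\mathcal{P}(\kappa))^N$ shows $\crit(k)=((2^\kappa)^+)^N$, so your inequality $o^N(\kappa)\geq\crit(k)$, combined with the ZFC bound $o(\kappa)\leq(2^\kappa)^+$ computed in $N$, yields $N\models o(\kappa)=(2^\kappa)^+$, and applying $k$ gives the same in $M$ --- rendering your recursion from (2) correct but unnecessary.
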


\begin{proof}
Let $i : M \to N = \ult(M,\mathcal U)$ be the ultrapower embedding, and let $k 
: N \to M$ be defined by $k([f]_{\mathcal U}) = j(f)(\kappa)$. 
Now $\kappa 
\in \ran(k)$ and $\p(\kappa)^M=\p(\kappa)^N$
and $N\subseteq M$.
Note $\kappa<\crit(k)$,
so $k(X)=X$ for all $X\in\p(\p(\kappa))^N$,
which implies that $\theta=(2^\kappa)^N=(2^\kappa)^M$
and $\theta<\crit(k)$.
Also $(\theta^+)^N<i(\kappa)<(\theta^+)^M$,
and therefore $\crit(k)=(\theta^+)^N$
and $k((\theta^+)^N)=(\theta^+)^M$.

Let $\gamma = o(\kappa)^M$, let $\xi = o(\kappa)^N$, and note 
that $\xi< \gamma$.  If $N \models \xi <(2^\kappa)^+$, then  $k(\xi)=\xi$,
so $M \models o(\kappa) = \xi$, a contradiction.  Thus $N \models o(\kappa) 
=(2^\kappa)^+$.
Applying $k$ therefore gives
$M\models o(\kappa)= (2^\kappa)^+$,
yielding part
(\ref{item:M_models_o(kappa)_large}).

For part (\ref{item:M_models_X_in_ult}), again since $\crit(k)>\kappa$,
it suffices to see that $N$ satisfies the same statement about $\kappa$.
So let $X\in N$ with $X\subseteq\p(\kappa)$.
Then $k(X)=X$ (and $k(\kappa)=\kappa$),
so it suffices to see that $M\models$``there is a normal measure
$\mu$ on $\kappa$ such that $X\in\ult(V,\mu)$'' (instead of $N$).
But $\mu=\mathcal{U}$ witnesses this in $M$.

For parts \ref{item:reflect_measure_one} and  
\ref{item:reflect_varphi_measure_one}, we have $M\models\varphi(A)$ iff
$N\models\varphi(A)$ because $\crit(k)>\kappa$,
and $N\models\varphi(A)$ iff
$M\models\varphi(A\cap\alpha)$ for $U$-measure one many $\alpha<\kappa$
as usual.
\end{proof}

We can use this method to obtain a large collection of mutually embeddable $\subseteq$-incomparable models, assuming there is $\kappa$ which is +2-strong and such that $2^{\kappa^+} > \kappa^{++}$.  This can be forced from a +3-strong cardinal by standard techniques.

\begin{theorem} 
 Suppose $\kappa$ is $+2$-strong, $2^\kappa = \kappa^+$, and $2^{\kappa^+} > 
\kappa^{++}$.  Then there is a collection of $\kappa^{+3}$-many pairwise 
$\subseteq$-incomparable mutually embeddable proper class transitive models of 
ZFC. Moreover, we can arrange that
the models all contain $\p(\kappa)$
but disagree pairwise over $\p(\kappa^+)$,
and the witnessing embeddings all have critical point $\kappa$.
\end{theorem}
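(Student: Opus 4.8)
The plan is to feed the strong $2$-cycle engine of this section with an $\omega$-fold iterate of a suitable extender, and then to peel off $\kappa^{+3}$-many pairwise $\subseteq$-incomparable ultrapowers by coding distinct subsets of $\kappa^+$ into them. First I fix a $+2$-strong extender $E$ with $\crit(E)=\kappa$. Since $V_{\kappa+2}\subseteq\ult(V,E)$ and the derived normal measure $\mathcal U$ with seed $\kappa$ lies in $V_{\kappa+2}$, we have $\mathcal U\in\ult(V,E)$, so the construction from the opening of this section applies. Iterating $E$ through $\omega$ yields $M_\omega$ together with a nontrivial self-embedding $j=j_{0,1}\rest M_\omega\colon M_\omega\to M_\omega$ with $\crit(j)=\kappa$ whose derived measure is $\mathcal U\in M_\omega$. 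Because every iteration step has critical point $\geq\kappa$ while $V_{\kappa+2}\subseteq\ult(V,E)$, we get $V_{\kappa+2}^{M_\omega}=V_{\kappa+2}^V$, so $\p(\kappa)$ and $\p(\p(\kappa))$ — and hence the ordinal $\kappa^+$ — are computed the same in $M_\omega$ as in $V$; in particular $M_\omega\sats 2^\kappa=\kappa^+$, and by the preceding theorem $M_\omega\sats o(\kappa)=(2^\kappa)^+=\kappa^{++}$.

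The embeddability is then furnished uniformly, exactly as for the single $2$-cycle: whenever $F\in M_\omega$ is a measure or extender derived from $j$, the ultrapower $\ult(M_\omega,F)$ maps back into $M_\omega$ by the \L o\'s/seed assignment $[f]\mapsto j(f)(\mathrm{seed})$, while $M_\omega$ maps into $\ult(M_\omega,F)$ by the ultrapower map. Thus each such ultrapower is mutually embeddable with $M_\omega$, and any two of them are mutually embeddable with each other by composing through $M_\omega$. Every witnessing embedding has critical point $\kappa$, and every such model contains $\p(\kappa)=\p(\kappa)^{M_\omega}=\p(\kappa)^V$ precisely because $\kappa$ is the critical point. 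So the first and third assertions of the \emph{Moreover} clause will come for free once the models are produced.

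It remains to exhibit $\kappa^{+3}$-many of these ultrapowers that are pairwise $\subseteq$-incomparable, and here the hypothesis $2^{\kappa^+}>\kappa^{++}$ is spent. Since $\p(\p(\kappa))^{M_\omega}=\p(\p(\kappa))^V$ has true cardinality $2^{2^\kappa}=2^{\kappa^+}\geq\kappa^{+3}$, I may fix a family $\mathcal F$ of $\kappa^{+3}$-many distinct subsets of $\p(\kappa)$, all lying in $M_\omega$; transporting along the bijection $\p(\kappa)\leftrightarrow\kappa^+$ available in $M_\omega$, these become $\kappa^{+3}$-many distinct members of $\p(\kappa^+)^{M_\omega}$. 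For each $X\in\mathcal F$ I would attach a measure or derived extender $F_X$ of $j$, guided by part (\ref{item:M_models_X_in_ult}) of the preceding theorem and by the long Mitchell order granted by $o(\kappa)=\kappa^{++}$, so that $X\in N_X:=\ult(M_\omega,F_X)$; as $N_X\subsetneq M_\omega$, each $N_X$ already disagrees with $M_\omega$ over $\p(\kappa^+)$. If the $F_X$ can be chosen so that $X\in N_X$ yet $X\notin N_Y$ for every other $Y\in\mathcal F$, then $X$ witnesses $N_X\nsubseteq N_Y$ and $Y$ witnesses $N_Y\nsubseteq N_X$, so the $N_X$ are pairwise $\subseteq$-incomparable and disagree pairwise over $\p(\kappa^+)$, completing the construction.

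The main obstacle is exactly this last coordination. One must control \emph{which} subsets of $\kappa^+$ enter a given ultrapower, arranging that $N_X$ captures its own $X$ while omitting every other member of $\mathcal F$, all while keeping each $F_X$ derived from the single self-embedding $j$ so that the back-embedding into $M_\omega$ survives. This is the delicate inner-model bookkeeping of the argument: the seed/generator of $F_X$ must be placed so as to realize $X$ as $j(f)(\mathrm{seed})$ without inadvertently realizing the other members of $\mathcal F$, and the family $\mathcal F$ must be thinned to be sufficiently "independent'' for this to be simultaneously possible across all $\kappa^{+3}$ indices. The cardinal hypotheses $2^\kappa=\kappa^+$ and $2^{\kappa^+}>\kappa^{++}$ are used precisely to guarantee both that there are $\kappa^{+3}$-many such independent codes in $\p(\p(\kappa))^{M_\omega}$ and that the Mitchell order on $\kappa$ in $M_\omega$ is long enough to realize them by internally derivable measures.
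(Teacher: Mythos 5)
You reproduce the paper's framework faithfully --- iterate a $+2$-strong extender $E$ $\omega$ times, note that $j=j_{0,1}\rest M_\omega$ is a self-embedding of $M_\omega$ whose derived measures lie in $M_\omega$, and embed ultrapowers back into $M_\omega$ via $[f]\mapsto j(f)(\mathrm{seed})$ --- but the step you label ``the main obstacle'' is precisely the content of the theorem, and you leave it open; this is a genuine gap, not a deferred detail. Moreover, the tools you gesture at would not close it. A normal measure supplied by part (\ref{item:M_models_X_in_ult}) of the preceding theorem (or by the long Mitchell order) for a particular $X$ is just some normal measure of $M_\omega$: it need not be derived from $j$, and then the factor map $[f]\mapsto j(f)(\mathrm{seed})$ is unavailable, so the ultrapower by it has no evident embedding back into $M_\omega$ and mutual embeddability is lost. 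The family one is forced to use is the family of \emph{finite-support components} of the extender itself: the paper takes $E$ to be a $(\kappa,\lambda)$-extender with $\lambda=2^{\kappa^+}$ and $\p(\kappa^+)\subseteq\ult(V,E)$, so that every $X\subseteq\kappa^+$ lies in some finite-support ultrapower $M_a=\ult(V,\mathcal U_a)$, $a\in[\lambda]^{<\omega}$; each $\mathcal U_a$ lies in $M_1$ and hence in $M_\omega$, and the agreement $V_{\kappa+2}^{M_\omega}=V_{\kappa+2}$ yields that $X\in N_a=\ult(M_\omega,\mathcal U_a)$ iff $X\in\ult(V,\mathcal U_a)$, so every $X\subseteq\kappa^+$ is captured by some $N_a$ that does embed back into $M_\omega$ via $k_a([f]_{\mathcal U_a})=j_E(f)(a)$. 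None of this apparatus appears in your proposal.

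Your target of full independence --- $X\in N_X$ but $X\notin N_Y$ for \emph{every} other $Y$ in a family of size $\kappa^{+3}$ --- is also the wrong one: it is stronger than needed and there is no evident way to arrange it directly. The paper gets by with much less. Since $2^\kappa=\kappa^+$, each $N_a$ contains at most $\kappa^+$-many subsets of $\kappa^+$ (this counting bound, not ``independence of codes,'' is where that hypothesis is spent); since $2^{\kappa^+}\geq\kappa^{+3}$, one can recursively choose $\la a_\alpha,X_\alpha\ra$ for $\alpha<\kappa^{+3}$ with $X_\alpha\in N_{a_\alpha}$ and $X_\alpha\notin N_{a_\beta}$ for all $\beta<\alpha$, because at each stage fewer than $2^{\kappa^+}$ subsets of $\kappa^+$ have been captured so far. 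This gives only one-sided freshness; the two-sided incomparability comes from a pressing-down argument: $f(\eta)=\sup\{\gamma<\eta : X_\gamma\in N_{a_\eta}\}$ is regressive at points of cofinality $\kappa^{++}$ (again by the $\kappa^+$ bound), so Fodor's lemma yields a stationary $Y\subseteq\kappa^{+3}$ on which, after discarding an initial segment, $X_\xi\notin N_{a_\eta}$ and $X_\eta\notin N_{a_\xi}$ for all $\xi\neq\eta$ in $Y$, whence the models $N_{a_\eta}$, $\eta\in Y$, are pairwise $\subseteq$-incomparable and disagree over $\p(\kappa^+)$. The counting, the diagonalization, and the Fodor step are all absent from your proposal, and without them (or a substitute) the construction does not go through; the Mitchell order, by contrast, plays no role at all.
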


\begin{proof}
Let $\kappa$ be as hypothesized, and let $E$ be an extender witnessing that 
$\kappa$ is +2-strong.  If $\lambda = 2^{\kappa^+}$, then we may assume that $E$ 
is a $(\kappa,\lambda)$-extender such that $\p(\kappa^+) \subseteq M = 
\ult(V,E)$.   Recall that $E$ is collection of ultrafilters $\la \mathcal U_a : 
a \in [\lambda]^{<\omega} \ra$ that project to one another in the appropriate 
sense, and $M$ is the direct limit of the models $M_a=\ult(V_,\mathcal U_a)$.  
If $j_E : V \to M$ is the direct limit map, then for each $a \in 
[\lambda]^{<\omega}$, $\mathcal U_a = \{ X \subseteq [\kappa]^{|a|} : a \in 
j_E(X) \}$.  For each $X \subseteq \kappa^+$, there is $a \in 
[\lambda]^{<\omega}$ such that $X \in M_a$.

Let us iterate $E$ $\omega$-times to obtain a system of models $\la M_i : i 
\leq \omega \ra$ and embeddings $\la j_{\alpha,\beta} : \alpha < \beta \leq 
\omega \ra$ as before.  Again, $j_{0,1} = j_E$ yields a self-embedding of  
$M_\omega$.  Since each $\mathcal U_a \in M_1$ and $\crit(j_{1,\beta}) > 
\lambda$ for $2 \leq \beta \leq \omega$, each $\mathcal U_a \in M_\omega$.  
Working in $M_\omega$, we can define the ultrapower map
\[ j_a^{M_\omega}:M_\omega\to N_a=\ult(M_\omega,\mathcal U_a).\]
Let $j^V_a:V\to M_a$ be
likewise for $V$. Since $V_{\kappa+2}^{M_\omega}=V_{\kappa+2}$,
for each $a\in[\lambda]^{<\omega}$,
 \[ \ult(V,\mathcal U_a)\inter V_{j^V_a(\kappa)+2}=\ult(M_\omega,\mathcal 
U_a)\inter V_{j^{M_\omega}_a(\kappa)+2}.\]
In particular, for each $X \subseteq \kappa^+$, $X \in \ult(M_\omega,\mathcal 
U_a)$ iff $X \in \ult(V,\mathcal U_a)$.  Like before, we can embed each 
$N_a$ back into $M_\omega$ via the factor map $k_a$ defined by 
$k_a([f]_{\mathcal U_a}) = j_E(f)(a)$.

Now since $2^\kappa = \kappa^+$, for each $a$, $\p(\kappa^+)^{N_a}$ has size 
only $\kappa^+$ from the point of view of $V$.  We can choose a sequence $\la 
a_\alpha : \alpha < \kappa^{+3} \ra$ such that for all $\alpha < \kappa^{+3}$, 
there is $X_\alpha \in \p(\kappa^+)^{N_{a_\alpha}}$ which is not in 
$\p(\kappa^+)^{N_{a_\beta}}$ for any $\beta < \alpha$.   For $\eta<\kappa^{+3}$
let
\[ f(\eta)=\sup\{\gamma<\eta : X_\gamma\in N_{a_\eta}\}, \]
and note that if $\cf(\eta)=\kappa^{++}$ then $f(\eta)<\eta$.
Therefore we get a stationary set $Y\sub\kappa^{+3}$
such that $f\rest Y$ is constant, and for $\eta,\xi\in Y$
with $\eta\neq\xi$, we have $X_\eta\notin N_{a_\xi}$
and $X_\xi\notin N_{a_\eta}$. We use $\left<N_{a_\eta}\right>_{\eta\in Y}$
as our desired system of models. For $a,b \in [\lambda]^{<\omega}$,  
\[ j^{M_\om}_b \circ k_a:N_a\to N_b \]
is elementary with 
critical point $\kappa$, so we are done.
\end{proof}

From a weaker assumption, we can achieve a similar situation in a forcing 
extension, along with GCH.  We can also arrange that 
the embeddings between any two of the models in our collection are the same on 
the ordinals. 

\begin{theorem}
Suppose GCH holds and $\kappa$ is $+2$-strong.  
 Then in a forcing extension, 
there is a system  $\la N_\alpha : \alpha < \kappa^{++} \ra$ of 
$\subseteq$-incomparable transitive proper class models of ZFC, along with 
elementary embeddings $\pi_{\alpha,\beta} : N_\alpha \to N_\beta$ such that 
\[ \pi_{\alpha_0,\beta_0} \restriction \ord = \pi_{\alpha_1,\beta_1} \restriction 
\ord \]
whenever $\alpha_0,\alpha_1,\beta_0,\beta_1<\kappa^{++}$ and $\alpha_i \neq \beta_i$ for $i=0,1$.
\end{theorem}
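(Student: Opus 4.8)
The plan is to combine the two techniques already developed: the iterated-ultrapower construction from the $+2$-strong extender (as in the immediately preceding theorem) together with a coding forcing that arranges the witnessing embeddings to agree on the ordinals, in the spirit of Proposition \ref{prop:3_models}. First I would force GCH-preservingly to arrange $2^{\kappa^+}=\kappa^{++}$ (trivially true under GCH) while setting up a suitable $(\kappa,\lambda)$-extender $E$ with $\lambda$ large enough that $\p(\kappa^+)\subseteq M=\ult(V,E)$, and iterate $E$ $\omega$-many times to form $M_\omega$ with its self-embedding $j_{0,1}\rest M_\omega$. As before, for each $a\in[\lambda]^{<\omega}$ the derived ultrafilter $\mathcal U_a$ lies in $M_\omega$, and we get factor models $N_a=\ult(M_\omega,\mathcal U_a)$ that embed back into $M_\omega$ via $k_a$. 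Because $2^\kappa=\kappa^+$ holds (GCH), each $\p(\kappa^+)^{N_a}$ has size $\kappa^+$, so a standard catch-up/stationarity argument as in the previous theorem yields $\kappa^{++}$-many (here we only need $\kappa^{++}$, not $\kappa^{+3}$) pairwise $\subseteq$-incomparable models $\la N_\alpha:\alpha<\kappa^{++}\ra$, each containing $\p(\kappa)$ but disagreeing over $\p(\kappa^+)$, with elementary embeddings $j^{M_\omega}_b\circ k_a:N_a\to N_b$ of critical point $\kappa$.

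The new demand is that the embeddings agree on the ordinals. The key structural observation is that the map $k_a([f]_{\mathcal U_a})=j_E(f)(a)$ depends on the seed $a$, and distinct seeds give distinct ordinal actions; similarly $j^{M_\omega}_b$ depends on $b$. So the naive composition $j^{M_\omega}_b\circ k_a$ will \emph{not} in general fix the ordinals uniformly. To fix this, I would exploit the freedom in choosing the seeds: by reflecting via Proposition \ref{prop:3_models}, if two embeddings into a common target agree on the ordinals then they agree on $\HOD$, so the strategy is rather to route all the embeddings through a \emph{single} fixed self-embedding $\pi=j_{0,1}\rest M_\omega$ of $M_\omega$ and then descend to the submodels. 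Concretely, I would arrange that each $N_\alpha$ is a submodel of $M_\omega$ closed under $\pi$ in the sense that $\pi[N_\alpha]\subseteq N_{\beta}$ for a canonically associated $\beta$, so that $\pi$ restricts to $\pi_{\alpha,\beta}:N_\alpha\to N_\beta$; since all these restrictions are restrictions of the \emph{one} map $\pi$, they automatically agree on the ordinals (indeed on all common arguments).

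The technically delicate step — and the one I expect to be the main obstacle — is making the $N_\alpha$ simultaneously (i) pairwise $\subseteq$-incomparable over $\p(\kappa^+)$, (ii) genuinely closed under the single fixed $\pi$, and (iii) proper class models of ZFC, all at once. The tension is that $\subseteq$-incomparability wants the $N_\alpha$ to be ``spread out'' and seed-dependent, whereas closure under a single $\pi$ forces strong coherence among them. This is precisely where the coding forcing enters: I would force (preserving GCH and the relevant strongness, which is possible below a $+2$-strong, or from the stated hypothesis by the techniques alluded to earlier) to add, for each $\alpha<\kappa^{++}$, a subset $X_\alpha\subseteq\kappa^+$ coded so that $X_\alpha\in N_\alpha$ but $X_\alpha\notin N_\beta$ for $\beta\neq\alpha$, while ensuring the coding is invariant under $\pi$ so that $\pi$ permutes the $N_\alpha$ in a controlled way. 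The verification that $\pi$ acts as a well-defined permutation of the index set $\kappa^{++}$ (so that for each $\alpha$ there is a target $\beta=\beta(\alpha)$ with $\pi_{\alpha,\beta}:N_\alpha\to N_\beta$), that all resulting embeddings share a common restriction to $\Ord$, and that $\subseteq$-incomparability survives the forcing, constitutes the heart of the argument; everything else is an adaptation of the preceding ultrapower construction and the elementarity bookkeeping already carried out there.
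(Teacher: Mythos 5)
Your proposal correctly isolates the difficulty (the seed-dependence of $k_a$ and $j^{M_\omega}_b$ ruins agreement on $\Ord$), but the repair you sketch cannot work, and you leave precisely what you call its ``heart'' unproved. Three concrete problems. First, under GCH the catch-up/stationarity argument of the preceding theorem does not give \emph{pairwise} incomparability: that argument needs $2^{\kappa^+}\geq\kappa^{+3}$ so that the function $f(\eta)=\sup\{\gamma<\eta : X_\gamma\in N_{a_\eta}\}$ is regressive at points of cofinality $\kappa^{++}$ below $\kappa^{+3}$ and Fodor applies; with only $\kappa^{++}$-many indices every point has cofinality $\leq\kappa^+$, so you get $N_{a_\alpha}\nsubseteq N_{a_\beta}$ for $\beta<\alpha$ but not the converse. (This failure is exactly why the theorem passes to a forcing extension at all.) Second, the ``single self-embedding'' scheme has the wrong shape: one map $\pi$ gives each $N_\alpha$ at most one target $N_{\beta(\alpha)}$, while the theorem demands embeddings between arbitrary pairs, all agreeing on $\Ord$; reaching the other pairs forces you to use iterates $\pi^n$, and $\pi^2\rest\Ord\neq\pi\rest\Ord$ (already $\pi(\kappa)=j_E(\kappa)\neq\pi^2(\kappa)$), so the required agreement fails. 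Moreover, the restriction of an elementary $\pi:M_\omega\to M_\omega$ to a transitive inner model $N_\alpha$ with $\pi[N_\alpha]\sub N_\beta$ need not be elementary as a map from $N_\alpha$ to $N_\beta$, so even the basic objects of your scheme are not known to be embeddings. Third, ``force to add $X_\alpha$ with $X_\alpha\in N_\alpha$ but $X_\alpha\notin N_\beta$'' is not a meaningful forcing statement for prescribed inner models; the models themselves must be manufactured from the generic object.

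The paper's proof resolves all of this with a different architecture: it routes every embedding through a common \emph{model}, not a common map. One forces with $\mathbb P(0,\kappa)\times\add(\kappa^+,\kappa^{++})$ and lifts $j_{\mathcal U}$ and $j_E$ to $V[G][H]$ in $\kappa^{++}$-many ways, indexed by automorphisms of $\add(\kappa^+,\kappa^{++})$ that swap the $0$th and $\alpha$th columns of $H$; this yields measures $\mathcal U_\alpha$ and extenders $E_\alpha$ in the extension. A key computation shows $j_{E_\alpha}(E_0)=j_{E_0}(E_0)$, so every $j_{E_\alpha}$ is a self-embedding of one common $\omega$-th iterate $M^*$, and each $\mathcal U_\alpha\in M^*$. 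The models are the internal ultrapowers $N_\alpha=\ult(M^*,\mathcal U_\alpha)$ (elementarity comes from \L o\'s, not from restricting a map); incomparability holds because the slice of $i^*_\alpha(G^*)$ at coordinate $\kappa$ is $h_\alpha$ and the columns of $H$ are mutually generic. The embeddings are the compositions $\pi_{\alpha,\beta}=i^*_\beta\circ k^*_\alpha$, where $i^*_\beta:M^*\to N_\beta$ is the internal ultrapower map and $k^*_\alpha:N_\alpha\to M^*$ is the external factor map. Agreement on the ordinals is then automatic: $i^*_\beta\rest M=i$ and $k^*_\alpha\rest N=k$ for all $\alpha,\beta$, and $\Ord\sub M\cap N$, so $\pi_{\alpha,\beta}\rest\Ord=(i\circ k)\rest\Ord$ for every pair. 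This design---two ordinal-rigid legs through a common model, with incomparability created by automorphism-twisted liftings that differ only on generic information---is the idea your proposal is missing.
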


\begin{proof}
Suppose $E$ is a $(\kappa,\kappa^{++})$-extender witnessing that $\kappa$ is 
$+2$-strong, and let $\mathcal U$ be the derived normal ultrafilter over 
$\kappa$.  Iterate $E$ $\omega$-times to obtain a system of models $\la 
M_i : i \leq \omega \ra$ and embeddings $\la j_{\alpha,\beta} : \alpha < \beta 
\leq \omega \ra$ as before.   Let $M = M_\omega$ and note that $j_{0,1} = j_E$ 
yields a self-embedding of $M$.  Let $i : M \to N = \ult(M,\mathcal U)$ be the 
ultrapower embedding, and let $k : N \to M$ be the factor map.  Since $\mathcal 
U \in M$, $(\kappa^{++})^N < i(\kappa) < (\kappa^{++})^M = \kappa^{++}$.  Thus 
$\crit(k) =  (\kappa^{++})^N$.

Let $\mathbb P(\gamma,\delta)$ be the Easton-support product of 
$\add(\alpha^+,\alpha^{++})$ over inaccessible $\alpha\in [\gamma,\delta)$.  Let 
$N_1 = \ult(V,\mathcal U)$.  Now $N_1$ is $\kappa$-closed
and letting $\mathbb Q^{N_1}=\mathbb P(\kappa+1,j_{\mathcal U}(\kappa))^{N_1}$,
then  $N_1\sats$``$\mathbb Q^{N_1}$ is 
$\alpha$-closed'',
where $\alpha$ is the least inaccessible of $N_1$ which is $>\kappa$.
So a standard argument shows that we can build
a filter $K \subseteq \mathbb Q^{N_1}$ which is generic over 
$N_1$. We can moreover do this working in $M_1$,
so we get $K\in M_1$. Let $k_1 : N_1 \to M_1$ be the 
factor map. We will observe in Claim \ref{claim:k_1[K]_gens_filter} below
that $k_1[K]$ generates 
 an $M_1$-generic filter $K'$.
For this, we first need to observe how the dense sets in $M_1$ can
nicely represented, which is just a general fact about factors of ultrapowers:

\begin{claim}
Let $x\in M_1$. Then there is a function $f:[(\kappa^{++})^{N_1}]^{<\om}\to N_1$
and $a\in[\kappa^{++}]^{<\om}$ such that 
$x=k(f)(a)$.\end{claim}
\begin{proof}To see this, let $g\in V$ be a function $g:[\kappa]^{<\om}\to V$
and $a\in[\kappa^{++}]^{<\om}$ 
with $x=j_E(g)(a)$. Then let $f=j(g)\rest[(\kappa^{++})^{N_1}]^{<\om}$.
Since $j_E(g)=k(j_{\mathcal U}(g))$, note that 
$k(f)=j_E(g)\rest[\kappa^{++}]^{<\om}$,
so $k(f)(a)=j_E(g)(a)=x$, as desired.
\end{proof}

\begin{claim}\label{claim:k_1[K]_gens_filter} $k_1[K]$ generates a filter $K'$ 
which is $\mathbb Q^{M_1}=\mathbb P(\kappa+1,j_E(\kappa))^{M_1}$-generic over 
$M_1$.\end{claim}
\begin{proof}
Let $D\in M_1$ be an open dense subset of $\mathbb Q^{M_1}$.
Using the previous claim, fix $f:[(\kappa^{++})^{N_1}]^{<\om}\to N_1$
with $f\in N_1$, and fix $a$, with $D=k(f)(a)$. We may assume that $f(b)$ is an 
open dense
subset of $\mathbb Q^{N_1}$ for each $b\in[(\kappa^{++})^{N_1}]^{<\om}$.
By the closure of $\mathbb Q^{N_1}$ in $N_1$,
it follows that $E=\bigcap\mathrm{range}(f)$ is an open dense subset of 
$\mathbb Q^{N_1}$,
and therefore $E\inter K\neq\emptyset$. But clearly
$k(E)\sub D$, so we are done.
\end{proof}
Now let $G \times H$ be $\mathbb P(0,\kappa) \times 
\add(\kappa^+,\kappa^{++})$-generic over $V$.  With this, we can 
simultaneously lift the embeddings $j_E$ and $j_{\mathcal U}$ to the domain 
$V[G][H]$.  Let $h = H \restriction (\kappa^{++})^{N_1}$. Calling the lifted 
embeddings by the same names, let $j_{\mathcal U}(G) = G \times h \times K$, and 
let $j_E(G) = G \times H \times K'$.  Since $j_{\mathcal U}[H]$ and $j_{E}[H]$ 
generate generics $H',H''$ over $N_1,M_1$ respectively, the embeddings can be 
lifted further to $V[G][H]$ as desired. Moreover, we can
lift $k$ to the domain $N_1[G][h][K][H']$ by setting
\[ k(G\times h\times 
K\times H')=G\times H\times K'\times H''.\]
This works since the original maps 
commute,
and note commutativity is preserved, i.e. we still have $k\circ 
j_{\mathcal U}=j_E$.

These lifted elementary embeddings are generated by an ultrafilter $\mathcal 
U_0$ and an extender $E_0$ in $V[G][H]$.  We can compute $\mathcal U_0$ in 
$M_1[G,H,K'][H'']$ because it is determined by the objects 
$V_{\kappa+1},\mathcal U,G,h,K$. 

Now this is only one way of lifting the 
embeddings.  For each $\alpha \in [(\kappa^{++})^{N_1},\kappa^{++})$, there is 
an automorphism of $\add(\kappa^+,\kappa^{++})$ which has the effect of 
switching the $0^{th}$ and $\alpha^{th}$ generic subset of $\kappa^+$.  Let us 
call the resulting filter $H_\alpha$ and let $h_\alpha$ be its restriction to 
$(\kappa^{++})^{N_1}$.  The filters $H_\alpha$ have the same information, but 
the filters $h_\alpha$ know things that their peers don't.  We can modify the 
liftings by instead putting $j_{\mathcal U}(G) = G \times h_\alpha \times K$ and 
$j_{E}(G) = G \times H_\alpha \times K'$.  These correspond to ultrafilters and 
extenders $\mathcal U_\alpha$ and $E_\alpha$, whose associated embeddings factor 
through by maps $k_\alpha$.  We have:
\begin{align*}
j_{\mathcal U_\alpha} &:  V[G][H] \to N_1[G,h_\alpha,K][H'] \\
j_{E_\alpha} &:  V[G][H] \to M_1[G,H_\alpha,K'][H''] = M_1[G,H,K'][H'']  \\
k_\alpha &: N_1[G,h_\alpha,K][H'] \to M_1[G,H_\alpha,K'][H'']
\end{align*}

Let $\left<M_n^*\right>_{n\leq\om}$ be the iterates
of $V[G][H]$ generated by $E_0$. In particular,
$M_1^*=M_1[G,H,K'][H'']$. Then $M^* = M[G^*][H^*]$, where
\[ G^* \times H^* \subseteq \mathbb 
P(0,j_{0,\omega}(\kappa))^M \times 
\add(j_{0,\omega}(\kappa)^+,j_{0,\omega}(\kappa)^{++})^M \]
is generic over $M$.  
\begin{claim}
 $j_{E_\alpha}(E_0)=j_{E_0}(E_0)$. 
\end{claim}
\begin{proof}
We have $j_{E_\alpha}\rest V=j_E=j_{E_0}\rest V$,
and $E_a\sub (E_0)_a,(E_\alpha)_a$ for each $a \in [\kappa^{++}]^{<\omega}$, where $X_a$ denotes the ultrafilter of the extender $X$ indexed by $a$. Therefore $j_{E_\alpha}(E)=j_{E_0}(E)$, and $j_{E_0}(E)_a \subseteq  j_{E_0}(E_0)_a$ for each $a \in [j_E(\kappa^{++})]^{<\omega}$.
 So let
$j^*_{12}:M^*_1\to M^*_2$
be the ultrapower map (associated to the ultrapower
$\ult(M^*_1,j_{E_0}(E_0))=M^*_2$),
and $j^\alpha_{12}$  the ultrapower map 
associated to the ultrapower
$\ult(M_1^*,j_{E_\alpha}(E_0))$.
It suffices to see that
\[ j^\alpha_{12}(j_{E_0}(G\times H))=j^*_{12}(j_{E_0}(G\times H)); \]
that is, that
\[ j^\alpha_{12}(G\times H\times K'\times H'')=j^*_{12}(G\times H\times 
K'\times H'').\]
But
\begin{equation}\label{eqn:first_step} j_{E_0}(G\times H)=G\times H\times 
K'\times H'', \end{equation}
and applying $j_{E_0}$ to this equation gives
\[ j^*_{12}(G\times H\times K'\times H'')=j_{E_0}(G\times H\times K'\times 
H'')\]
\[ =G\times H\times K'\times H''\times j_{E_0}(K')\times j_{E_0}(H'').\]
\[ =G\times H\times K'\times H''\times j_{E_\alpha}(K')\times 
j_{E_\alpha}(H''),\]
because $K'\in V$ and $H''$ is the $M_1$-generic
generated by $j_E[H]$, but $j_{E_0}(H)=j_{E_\alpha}(H)=H''$.

Now applying $j_{E_\alpha}$ to equation (\ref{eqn:first_step})
gives
\[ j^\alpha_{12}(G\times H_\alpha\times K'\times H'')=j_{E_\alpha}(G\times 
H\times K'\times H'') \]
\[ =G\times H_\alpha\times K'\times H''\times j_{E_\alpha}(K')\times 
j_{E_\alpha}(H'').\]
But $\crit(j^\alpha_{12})=j_E(\kappa)>\kappa^{++}$, and therefore
we can replace the two $H_\alpha$'s in this equation with $H$'s,
and combined with the earlier equations, this yields
\[ j^\alpha_{12}(G\times H\times K'\times H'')=G\times H\times K'\times 
H''\times j_{E_\alpha}(K')\times j_{E_\alpha}(H'') \]
\[ =j^*_{12}(G\times H\times K'\times H''),\]
as desired.
\end{proof}

By the claim, the
$\omega^{th}$ iterate of $M_1^*$ by $j_{E_\alpha}(E_0)$
is again
$M^*$.  Therefore $j_{E_\alpha}$ yields a self-embedding of $M^*$, and 
$j_{E_\alpha}(G^*)$ is the same as $G^*$, except with the $0^{th}$ and 
$\alpha^{th}$ generic subsets of $\kappa^+$ switched.

Each $\mathcal U_\alpha$ is in $M^*$, since it is in $M_1[G,H,K'][H'']$ and of 
rank below the critical point of the next embedding.  We can build the internal 
ultrapower maps
\[ i^*_\alpha : M^* \to N^*_\alpha = \ult(M^*,\mathcal U_\alpha),\] 
and externally define a factor map $k^*_\alpha : N^*_\alpha \to M^*$ by 
$k^*_\alpha([f]_{\mathcal U_\alpha}) = j_{E_\alpha}(f)(\kappa)$.  Note that 
$i^*_\alpha \restriction M$ is the ultrapower map $i : M \to N$, and $k^*_\alpha 
\restriction N = k$.

For $(\kappa^{++})^N \leq \alpha < \kappa^{++}$, the slice of $i^*_\alpha(G^*)$ at coordinate $\kappa$ is $h_\alpha$.  Thus for $\alpha \neq \beta$, we have $N^*_\alpha \nsubseteq N^*_\beta \nsubseteq N^*_\alpha$.  Finally, we define $\pi_{\alpha,\beta} : N_\alpha \to N_\beta$ by $i_\beta^* \circ k_\alpha^*$.
\end{proof}

In all of the constructions considered so far, the mutually embeddable models are sets or classes of some background universe that are not countably closed.  However, this is not a necessary feature:

\begin{proposition}
Suppose $\kappa$ is +2-strong. Then there are two distinct $\kappa$-closed mutually embeddable transitive models of $\ZFC$. 
\end{proposition}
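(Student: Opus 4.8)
The plan is to reduce the proposition to producing a single $\kappa$-closed \emph{self-embeddable} model, and then to read off the $2$-cycle from the theorem on derived ultrafilters proved above. Suppose we have a transitive proper class model $M \models \ZFC$ that is $\kappa$-closed and carries a nontrivial elementary $j \colon M \to M$ with $\crit(j)=\kappa$ whose derived normal measure $\mathcal U$ on $\kappa$ (with seed $\kappa$) lies in $M$. Then we are exactly in the situation of that theorem, so setting $N = \ult(M,\mathcal U)$, the ultrapower map $i \colon M \to N$ together with the factor map $k \colon N \to M$ given by $k([f]_{\mathcal U}) = j(f)(\kappa)$ witness mutual embeddability of $M$ and $N$; and $M \neq N$ since $\mathcal U \in M \setminus N$. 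Moreover $N$ is again $\kappa$-closed: it is an internal ultrapower of $M$ by a measure on $\kappa$, so $M \models {}^{\kappa}N \subseteq N$, and since $N \subseteq M$ with ${}^{\kappa}M \subseteq M$, any $\sigma \colon \kappa \to N$ lying in $V$ first belongs to $M$ and is then recognized by $M$ as an element of $N$. Thus $(M,N)$ is the desired pair.

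The real work is constructing $M$. I would begin with the $+2$-strong extender $E$ and set $M_{*} = \ult(V,E)$, which is $\kappa$-closed, and, because $V_{\kappa+2} \subseteq M_{*}$, satisfies $\mathcal P(\kappa)^{M_{*}} = \mathcal P(\kappa)^{V}$ and contains the derived normal measure $\mathcal U$, so $M_{*} \models$ ``$\kappa$ is measurable''. What $M_{*}$ lacks is a self-embedding. The usual device for manufacturing one --- replacing $M_{*}$ by the direct limit $M_{\omega}$ of the length-$\omega$ iteration of $E$, whose shift map $j_E \restriction M_{\omega}$ is a self-embedding --- is unavailable, because $M_{\omega}$ is not even $\omega$-closed: its critical sequence $\langle \kappa_n \rangle_{n<\omega}$ is cofinal in $j_{0,\omega}(\kappa)$, an ordinal $M_{\omega}$ regards as regular, so this $\omega$-sequence is omitted from $M_{\omega}$.

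The main obstacle is exactly this tension: self-embeddings arise from $\omega$-direct limits, which are never $\omega$-closed, while $\kappa$-closure arises from single ultrapowers, which admit no self-embedding. The point to exploit is that any $2$-cycle of $\kappa$-closed models yields, by composing its two embeddings, a self-embedding $j$ of a $\kappa$-closed model, and then $\kappa$-closure forces the critical sequence $\langle j^n(\kappa) \rangle_{n<\omega}$ to be an \emph{internal} $\omega$-sequence, so that $M \models \cf(\sup_n j^n(\kappa)) = \omega$; in particular there is no formal contradiction in asking for such an $M$. I would therefore aim to build a $\kappa$-closed $M$ whose self-embedding internalizes its own critical sequence, exploiting that the closed model $M_{*}$ does contain that sequence (it lies in $V$ and has length $\omega \le \kappa$), and then invoke the first paragraph. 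I expect the delicate step to be arranging simultaneously that $j \colon M \to M$ is fully elementary and that the derived measure belongs to $M$ itself, rather than only to the ambient model $M_{*}$.
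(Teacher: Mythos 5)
Your opening reduction is correct, and it is exactly the framework the paper itself uses: if $M\models\ZFC$ is transitive, $\kappa$-closed, and carries a nontrivial elementary $j\colon M\to M$ with $\crit(j)=\kappa$ whose derived normal measure $\mathcal U$ lies in $M$, then $N=\ult(M,\mathcal U)$ together with the internal ultrapower map and the factor map $k([f]_{\mathcal U})=j(f)(\kappa)$ is a $2$-cycle of distinct $\kappa$-closed models. Your diagnosis of the obstruction is also right: the direct limit $M_\omega$ of the length-$\omega$ iteration carries the shift self-embedding but omits its own critical sequence, so it is not even countably closed, while single ultrapowers are closed but admit no evident self-embedding.

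However, the proposal stops exactly where the real work begins: you never construct $M$, only state that you ``would aim to build'' a $\kappa$-closed model whose self-embedding internalizes its critical sequence, and you flag the delicate step without resolving it. The missing idea --- which is essentially the entire content of the paper's proof --- is that the closure of $M_\omega$ can be repaired by \emph{forcing}: one adjoins the critical sequence itself as a Prikry generic. Concretely, choose $a\in[\lambda]^{<\omega}$ with $\kappa\in a$ such that $\mathcal U=[f]_{E_a}$ for some $f$, so that the projected measure $E_a$ on $[\kappa]^{|a|}$ is $\kappa$-complete and its ultrapower contains $\mathcal U$ (this is where $+2$-strength is used); iterate $E_a$ $\omega$ many times; then, by a result of the first two authors in \cite{2019arXiv190506062E} (a Bukovsk\'y--Dehornoy-type phenomenon), the sequence $P_a=\langle a, j_{0,1}(a), j_{0,2}(a),\dots\rangle$ is generic over $M_\omega$ for the tree Prikry forcing $j_{0,\omega}(\mathbb{P}_{E_a})$, and $M_\omega[P_a]$ \emph{is} closed under $\kappa$-sequences. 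Since the image of $P_a$ under the first iteration map is the shift $\langle j_{0,n+1}(a) : n<\omega\rangle$, which generates the same extension, the embedding restricts to a self-embedding of $M=M_\omega[P_a]$; and $\mathcal U$, having rank below $\crit(j_{1,\omega})$, lies in $M_\omega\subseteq M$. This places you precisely in the situation of your first paragraph. Nothing in your sketch supplies this step, and it is not obtainable by massaging $M_{*}=\ult(V,E)$ or $M_\omega$ themselves; passing to this particular generic extension of $M_\omega$ is the device that dissolves the tension you correctly identified.
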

\begin{proof} 
Let us prove the claim first under the stronger hypothesis, that $\kappa$ is $2^\kappa$-supercompact.

Let $\mathcal W$ be a normal $\kappa$-complete ultrafilter over $\p_\kappa(2^\kappa)$, and let $\mathcal U$ be its derived ultrafilter over $\kappa$.  Let $\la M_i : i \leq \omega \ra$ be the iterates of $V$ by $\mathcal W$, with associated embeddings $j_{\alpha,\beta} : M_\alpha \to M_\beta$, and $M_\omega$ being the direct limit.  As before, $j_{\mathcal W} = j_{0,1}$ yields a self-embedding of $M_\omega$.

Now consider the sequence $\la j_{n,n+1}[j_{0,n}(2^\kappa)] : n < \omega \ra$.  It is shown in \cite{2019arXiv190506062E} that this sequence is generic over $M_\omega$ for the Prikry forcing associated to $j_{0,\omega}(\mathcal W)$, and that if $G$ is the corresponding generic filter, then $M_\omega[G]$ is $2^\kappa$-closed, and $j_{\mathcal W}$ lifts to a self-embedding of $M_\omega[G]$.  The forcing adds no bounded subsets of $j_{0,\omega}(\kappa)$.

We also have that $\mathcal U$ is in $M_1$, and thus is in $M_\omega$ since $\crit(j_{1,\omega}) > (2^\kappa)^+$.  Let $i : M_\omega[G] \to N$ be the ultrapower map by $\mathcal U$ as computed in $M_\omega[G]$.  Since $\mathcal U = \{ X \subseteq \kappa : \kappa \in j_{\mathcal W}(X) \}$, there is as before a factor map $k : N \to M_\omega[G]$ such that $j_{\mathcal W} \restriction M_\omega[G] = k \circ i$.  Since $M_\omega[G]$ believes that $N$ is $\kappa$-closed, and $M_\omega[G]$ is itself $2^\kappa$-closed, both models are $\kappa$-closed.

Let us now reduce the large cardinal strength. 
Let $\kappa$ be $+2$-strong. Let $E$ be a $(\kappa,\lambda)$-extender witnessing it and let $\mathcal{U}$ be the derived ultrafilter from $E$. Let $a \in [\lambda]^{<\omega}$ and $f \colon \kappa \to V_\kappa$, be such that $[f, a]_E = \mathcal U$. Without loss of generality, $\kappa \in a$. Let us look at $E_a$, which is a measure on $\kappa^{|a|}$. Then, we can conclude that $\mathcal{U} = [f]_{E_{a}}$. 

Let $\mathbb{P}_{E_a}$ be the tree Prikry forcing for the $\kappa$-complete ultrafilter $E  _a$. Let us iterate $V$ using the measure $E_a$.  
By applying the same arguments as before, the sequence $P_a = \langle a, j_1(a), j_2(a), \dots\rangle$ is $M_{\omega}$-generic Prikry sequence for the forcing $j_\omega(\mathbb{P}_{E_a})$, and $M_{\omega}[P_a]$ is closed under $\kappa$-sequences. Moreover, $j_{E_a}(P_a) = \langle j_{n + 1}(a) \mid n < \omega\rangle$ and thus, $M_{\omega}[P_a] = M_{\omega}[j_E(P_a)]$ and thus we conclude as before that $j_{E} \restriction M_{\omega}[P_a]$ is a self map of $M_{\omega}[P_a]$, which splits through the measure $\mathcal U$. 
\end{proof} 


\section{Open questions}

\begin{question}
What is the exact consistency strength of the statement that there exists an inner model $M$ and an elementary embedding $j : M \to M$ such that the derived ultrafilter $\mathcal U = \{ X \subseteq \crit(j) : \crit(j) \in j(X) \} \in M$?
\end{question}

\begin{question}
What is the consistency strength of the statement that there are two distinct countably closed mutually embeddable inner models?
\end{question}

\begin{question}
Is it consistent to have two distinct countably closed inner models $M,N$ and embeddings $i : M \to N$, $j : N \to M$ such that $i \rest\Ord = j\rest\Ord$?
\end{question}

\bibliographystyle{amsplain.bst}
\bibliography{mutualembed}

\end{document}